\def\P{\mathbb P}
\def\Z{\mathbb Z}
\def\AA{\mathcal A}
\def\BB{\mathcal B}
\def\FF{\mathcal F}
\def\GG{\mathcal G}
\def\NN{\mathcal N}
\def\OO{\mathcal O}
\def\RR{\mathcal R}
\def\SS{\mathcal S}
\def\TT{\mathcal T}
\def\CC{\mathbf C}
\def\DD{\mathbf D}
\def\EE{\mathbf E}
\def\LL{\mathbf L}
\def\VV{\mathbf V}
\def\QQ{\mathbf Q}
\def\WW{\mathbf W}
\def\XX{\mathbf X}
\def\YY{\mathbf Y}
\def\Hom{{\operatorname{Hom}}}
\def\RHom{{\operatorname{RHom}}}
\def\uRHom{\underline{\operatorname{RHom}}}
\def\Spec{{\operatorname{Spec}}}
\def\Cl{{\operatorname{Cl}}}
\def\Pic{{\operatorname{Pic}}}
\def\Bl{{\operatorname{Bl}}}
\def\Sing{{\operatorname{Sing}}}
\def\codim{{\operatorname{codim}}}
\def\rk{{\operatorname{rk}}}
\def\wt{\widetilde}
\def\wh{\widehat}
\def\ol{\overline}
\def\Db{\DD^b}
\def\Dperf{\DD^{\mathrm{perf}}}
\def\Kt{{\mathrm{K}}}
\theoremstyle{plain}
\newtheorem{dummy}{dummy}[section]
\newtheorem{theorem}[dummy]{Theorem}
\newtheorem{proposition}[dummy]{Proposition}
\newtheorem{lemma}[dummy]{Lemma}
\newtheorem{corollary}[dummy]{Corollary}
\newtheorem{example}[dummy]{Example}
\newtheorem{definition}[dummy]{Definition}
\newtheorem{remark}[dummy]{Remark}
\numberwithin{equation}{section}
\def\bal{\begin{aligned}}
\def\eal{\end{aligned}}
\newcolumntype{P}[1]{>{\centering\arraybackslash}p{#1}}
\newcolumntype{M}[1]{>{\centering\arraybackslash}m{#1}}
\title{Derived categories of nodal del Pezzo threefolds}
\author{Nebojsa Pavic and Evgeny Shinder}
\dedicatory{Dedicated to Yuri Gennadievich Prokhorov}
\address{Department of Mathematics and Scientific Computing, University of Graz,
Heinrichstra{\ss}e 36,
8020 Graz,
Austria}
\email{nebojsa.pavic@uni-graz.at}
\address{School of Mathematical and Physical Sciences, University of Sheffield,
Hounsfield Road, S3 7RH, UK}
\email{eugene.shinder@gmail.com}
\begin{document}

\maketitle

\begin{abstract}
    We give a complete answer for the existence of Kawamata type semiorthogonal decompositions of derived categories of nodal del Pezzo threefolds. 
    More precisely, we show that nodal del Pezzo threefolds of degree $1\leq d \leq 4$ have no Kawamata type decomposition and that all nodal del Pezzo threefolds of degree $5$ 
    admit a Kawamata decomposition. 
    For the proof we go through
    the classification of singular
    del Pezzo threefolds, compute
    divisor class groups of nodal
    del Pezzo
    threefolds of small degree and
    use projection from a line
    to construct Kawamata semiorthogonal
    decompositions for the degree $5$ case.
    An analogous decomposition of the nodal del Pezzo threefold of degree $6$ has been recently constructed by Kawamata.
    
    Our construction of the Kawamata decomposition
    for a singular del Pezzo threefold of degree $5$ fits into a family of semiorthogonal decompositions (which we call a relative tilting decomposition) interpolating between a Kawamata decomposition on a singular fiber and a full exceptional collection on the smooth fibers.
    
\end{abstract}

\tableofcontents

\section{Introduction}

Derived categories of coherent
sheaves, their equivalences and
semiorthogonal decompositions
provide a homological algebra
counterpart of the Minimal Model Program \cite{BO-preprint, BO-ci}. Namely, flops are expected to 
give rise to derived equivalences,
while flips, divisorial contractions
and Mori fiber spaces, as well as Sarkisov links
between them often correspond
to semiorthogonal decompositions.
Starting from smooth varieties,
singularities appear naturally and
can not be avoided
in the Minimal Model Program.
However, derived categories of coherent
sheaves are currently mostly understood in the 
smooth projective case
with the singular case providing both
technical and conceptual difficulties.
For instance, derived categories of smooth Fano threefolds
are well-understood, see \cite{Kuznetsov-rationality},
but the singular case is much less clear,
and constitutes an 
area of active current research.
Derived categories
of singular del Pezzo surfaces
have recently received a lot of attention
\cite{Kuznetsov-sextics, KKS, Xie-quintics},
and for singular Fano threefolds 
some sporadic nontrivial examples
have been constructed by Kawamata
\cite{Kawamata-CY, Kawamata-V6, Kawamata-P1113}.
Semiorthogonal decompositions of singular varieties
have been related to their smoothings for surfaces \cite{kawamata-smoothing} and Fano threefolds \cite{KS-fano}.

One notable conceptual difficulty of constructing
semiorthogonal decompositions of singular
varieties is appearance of obstructions
coming from algebraic K-theory, in the form
of the Brauer group \cite{KKS} or the $\Kt_{-1}$
group \cite{Kalck-Pavic-Shinder}, which do not appear in the smooth case. In some situations, such as for toric
surfaces a semiorthogonal decomposition
of a certain type of the derived category
can be constructed as soon as the $\Kt$-theoretic obstruction
vanishes \cite{KKS}.

In this paper we study semiorthogonal 
decompositions
of derived categories of nodal projective threefolds
in the framework of Kawamata semiorthogonal decompositions \cite{Kalck-Pavic-Shinder}, which are certain
generalizations of full exceptional collections
to a singular variety, see Definition \ref{def:Kawamata}.
We concentrate on nodal del Pezzo threefolds,
that is projective Fano threefolds $X$
with $-K_X = 2H$ for some $H \in \Pic(X)$, 
having ordinary double points.
Del Pezzo threefolds have
been classified by Iskovskikh, as part of the project of classifying smooth 
Fano threefolds
\cite{Iskovskikh} 
and by Fujita, as part of the project of
classifying possibly singular 
higher-dimensional del Pezzo varieties, see \cite[Chapter 3]{IskovskikhProkhorov} for an overview.
The degree $d := H^3$
of del Pezzo threefolds
satisfies $1 \le d \le 8$,
and nodal ones exist in degrees $1 \le d \le 6$.
Typical examples of del Pezzo threefolds
are
a cubic hypersurface in $\P^4$ ($d = 3$)
and a complete intersection of two quadrics in $\P^5$ ($d = 4$), see Theorem \ref{thm:delPezzo} for the complete classification.
The complexity of the geometry of del Pezzo
threefold $X$ depends
on its degree which
sits in one of three bands, cf. \cite{Prokhorov-G-Fano}:
$1 \le d \le 3$ (complicated: smooth 
$X$ is irrational,
but singular degenerations can be rational), 
$4 \le d \le 6$
(interesting and well-behaved: 
$X$ rational, singularities understood, cf. Corollary \ref{cor:singularities}), 
$d \ge 7$ (trivial: rational, smooth and rigid).
These bands roughly match
the complexity of the corresponding del Pezzo
surface hyperplane section.

We completely characterize nodal del Pezzo threefolds
having a Kawamata semiorthogonal decomposition
of their derived category.
Our main result is the following.

\begin{theorem}[see Corollary \ref{cor:main} for other equivalent statements]
Let $X$ be a nodal (non smooth) del Pezzo threefold.
The following condition are equivalent:
\begin{enumerate}
    \item $\Db(X)$ admits a Kawamata decomposition
    \item $\rk(\Cl(X)) = \rk(\Pic(X)) + |\Sing(X)|$
    \item $H^3 \in \{5,6\}$
\end{enumerate}
\end{theorem}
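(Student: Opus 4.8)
The plan is to establish the cycle of implications $(1)\Rightarrow(2)\Rightarrow(3)\Rightarrow(1)$. The step $(1)\Rightarrow(2)$ is a $\Kt$-theoretic obstruction argument valid for all nodal threefolds; $(2)\Rightarrow(3)$ is obtained by running through the classification of nodal del Pezzo threefolds (Theorem \ref{thm:delPezzo}) and computing divisor class groups family by family; and $(3)\Rightarrow(1)$ proceeds by explicit construction of the decompositions.

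For $(1)\Rightarrow(2)$: assume $\Db(X)=\langle\Db(R_1),\dots,\Db(R_n)\rangle$ is a Kawamata decomposition, with each $R_i$ a finite-dimensional local $k$-algebra. The components with $R_i$ regular lie in $\Perf(X)$, and the remaining ones match the summands of the singularity category $\Dsg(X)\cong\bigoplus_{p\in\Sing(X)}\Dsg(\wh\OO_{X,p})$, each summand being the singularity category of a three-dimensional ordinary double point. Feeding this into the localization sequence $\Kt(\Perf(X))\to\Gt(X)\to\Kt(\Dsg(X))$ together with the analogous sequences for the $R_i$, additivity of K-theory along the decomposition, and the vanishing of $\Kt_{-1}$ for zero-dimensional rings, yields $\Kt_{-1}(X)=0$; this is the obstruction recorded in \cite{KPS}. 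It then remains to identify $\Kt_{-1}(X)$ for a nodal del Pezzo threefold. Using the blow-up of $X$ along its finite singular locus — whose exceptional divisor is a disjoint union of smooth quadric surfaces $\P^1\times\P^1$ — together with excision and blow-up invariance of negative K-theory and the vanishing of $\Kt_{-1},\Kt_{-2}$ for a (smooth, rational) resolution, one computes that $\Kt_{-1}(X)$ is free of rank $\delta:=\rk\Pic(X)+|\Sing(X)|-\rk\Cl(X)$. Note $\delta\ge 0$ always, since $\Cl(X)/\Pic(X)$ embeds into $\bigoplus_{p}\Cl(\wh\OO_{X,p})=\Z^{|\Sing(X)|}$. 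Hence $(1)$ forces $\delta=0$, which is exactly $(2)$; conversely, this computation shows $\Kt_{-1}(X)=0$ is another equivalent condition.

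For $(2)\Rightarrow(3)$: since nodal non-smooth del Pezzo threefolds exist only for $1\le d\le 6$, it suffices to show $\delta>0$ for every nodal non-smooth $V_d$ with $1\le d\le 4$. On the standard models — an intersection of two quadrics in $\P^5$ for $d=4$, a cubic in $\P^4$ for $d=3$, a quartic double solid for $d=2$, a sextic in $\P(1,1,1,2,3)$ for $d=1$ — one has $\Pic(X)=\Z$ by a Lefschetz-type theorem, so the task is to bound $\rk\Cl(X)$ strictly below $|\Sing(X)|+1$. This is done by classifying the possible node configurations on each family and computing $\Cl(X)$ directly (via the discriminant pencil for $d=4$, via small resolutions and projective models for $d=3$, via the weighted-projective and double-cover geometry for $d=1,2$), showing that the nodes always satisfy a nontrivial relation among their local Weil divisor classes. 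For $d=5,6$ one likewise computes $\Cl(X)$ and finds $\delta=0$, re-deriving $(3)\Rightarrow(2)$.

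For $(3)\Rightarrow(1)$: the case $d=6$ is \cite{Kawamata-V6}. For $d=5$ I would fix a line $\ell\subset X$ in suitable general position relative to $\Sing(X)$, blow it up, and use the resulting birational morphism from $\Bl_\ell X$ onto a rational variety carrying a full exceptional collection (a projective space, a quadric, or a blow-up thereof). Transporting that collection back through the blow-up of $\ell$ by the projective-bundle and blow-up formulas, followed by a sequence of mutations, should produce a semiorthogonal decomposition of $\Db(X)$ whose components are exceptional objects together with exactly $|\Sing(X)|$ copies of $\Db(R_p)$, one for each node — a Kawamata decomposition. The main obstacle I expect is the class-group bookkeeping in $(2)\Rightarrow(3)$ for the low-degree families, especially $d=1,2$, where the singular models are weighted-projective hypersurfaces or branched covers; a secondary difficulty is making the $d=5$ projection argument uniform over all nodal degenerations — choosing $\ell$ compatibly with the singular points and controlling how the mutations interact with the node components.
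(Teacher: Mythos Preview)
Your overall architecture --- the cycle $(1)\Rightarrow(2)\Rightarrow(3)\Rightarrow(1)$, the $\Kt$-theoretic obstruction for the first implication, classification plus class-group control for the second, and projection from a line for the third --- matches the paper's. In particular your anticipation of the $d=5$ construction (blow up a line, identify the result with a blow-up of a quadric threefold along a degenerate twisted cubic, then mutate) is exactly what the paper does; your worry about uniformity over all nodal degenerations is handled there by working in a one-parameter family and showing a \emph{standard} line (normal bundle $\OO\oplus\OO$, in the smooth locus, not on a plane) always exists (Lemma~\ref{lem:standardline}, Theorem~\ref{thm:degeneration}).

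The genuine gap is in $(2)\Rightarrow(3)$ for $d\le 3$. Your plan of ``classifying the possible node configurations and computing $\Cl(X)$ directly'' is not a workable strategy: nodal $V_1$, $V_2$, $V_3$ can have many nodes in many configurations, and there is no uniform direct description of $\Cl$. The paper bypasses this entirely via the Cynk--Rams defect formula (Proposition~\ref{prop:cynk}, Corollary~\ref{cor:CynkRams}): for a nodal hypersurface $X\subset W$ with $\omega_W\otimes\OO_W(2X)$ basepoint-free on the smooth locus, the defect equals $|\Sing(X)|$ minus the number of conditions the nodes impose on $|\omega_W\otimes\OO_W(2X)|$, hence is strictly less than $|\Sing(X)|$. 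This single lemma disposes of $d=1,2,3$ uniformly. For $d=4$ the paper also does something different from your discriminant-pencil suggestion: it reuses the projection-from-a-line construction to get $\Kt_{-1}(V_4)\simeq\Kt_{-1}(C)$ with $C$ the associated nodal genus-$2$ curve, which is nonzero, so $V_4$ never has maximal defect. Thus the key missing ingredient in your proposal is the Cynk--Rams input; once you have it, the rest of your outline is essentially the paper's proof.
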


Condition (2) says that $X$ has as many Weil divisors
as the singularities allow for (see \eqref{eq:defect-seq}).
Theorem 1.1 provides a complete match between
obstructions to Kawamata decompositions coming from the $\Kt_{-1}$ group and Weil
divisors as developed in \cite{Kalck-Pavic-Shinder} and the possibility
of constructing such a decomposition when these obstructions
are trivial, which is analogous to the case of toric surfaces \cite{KKS}.
Regarding condition (3), 
we note that a smooth del Pezzo
threefold $X$ of degree $d$ has a full exceptional
collection if and only if $d \ge 5$ 
(equivalently when $h^{1,2}(X) = 0$, cf. \cite[Proposition 3.6]{Prokhorov-G-Fano}), and
as we exclude $d \ge 7$ cases which do not
have singular degenerations, Kawamata
decompositions in (1) can be considered
precisely as limits of exceptional collections
from the smooth del Pezzo threefolds,
see Remark \ref{rem:limit}.

Theorem 1.1 involves a mixture
of algebraic $\Kt$-theory,
computing class groups of nodal threefolds
and classification of Fano varieties.
The proof roughly goes 
as follows. The implication (1) $\implies$ (2)
is proved using vanishing of the
negative algebraic
$\Kt$-theory groups \cite{Kalck-Pavic-Shinder},
in (2) $\implies$ (3) we need the notion of 
defect of a linear system
for computing class groups \cite{cynk, Rams},
and in (3) $\implies$ (1) a construction of a semiorthogonal
decomposition based on the geometry of $X$ is required.
Existence of Kawamata decompositions for the nodal del Pezzo
threefold of degree $6$ goes back to the original work of Kawamata
\cite{Kawamata-V6}.
The main new ingredient in this paper
is existence of Kawamata decompositions
for nodal del Pezzo threefolds of degree $5$,
which was conjectured in \cite[Conjecture 1.3]{Kalck-Pavic-Shinder}.

Let us explain our approach to 
the derived category of
nodal del Pezzo threefolds $X$
of degree $5$. It is well known that
$X$ is a linear section of $\mathrm{Gr}(2,5) \subset \P^9$
and that
there are precisely three such threefolds up to isomorphism,
with
one, two or three nodes respectively, see e.g. \cite{Prokhorov-G-Fano}.
Following Iskovskikh \cite{Iskovskikh}
we project $X$ from a sufficiently general
line $L \subset X$. The resulting birational
map identifies the blow up of $X$ at $L$
with a blow up of a quadric threefold
$Q \subset \P^4$
along a nodal curve $C \subset Q$ 
of arithmetic genus zero
and degree three (see Theorem \ref{thm:degeneration}
for a more general statement which applies
to degenerations of del Pezzo threefolds of degree $4 \le d \le 6$).
Then the derived category of $X$ will be decomposed using mutations
of semiorthogonal decompositions coming from these blow ups.
The same approach for degree $4$ nodal 
del Pezzo threefolds
leads to a decomposition of their derived
category in terms of a copy of a nodal
associated curve of genus $2$,
see Theorem \ref{thm:decomp} for both $d = 4,5$
cases.
Moreover, using this method one can also recover Kawamata's decomposition of the derived category of the nodal $V_6$ as in \cite{Kawamata-V6}, see Remark \ref{rem:V6}.

Our construction works in any family of del Pezzo threefolds
of degree $5$ producing what we call a relative tilting decomposition which restricts to a full exceptional collection of objects on a smooth fibers and to a Kawamata semiorthogonal decomposition on the singular ones. In this sense our Kawamata semiorthogonal decomposition on a singular variety 
can be understood as a limit of a full exceptional collection from a smoothing,
see Remark \ref{rem:limit}.

We remark that in terms of the Minimal Model Program
the geometric input for our
semiorthogonal decompositions is provided by
the so-called 
Sarkisov links (for us these are simply
certain compositions of a blow up and a blow down) with non-smooth lci (nodal) centers,
and our point is that
from the derived categories perspective, 
producing nodal 
threefolds by blowing up and blowing down with
centers being nodal curves in the smooth locus
is completely analogous to using smooth blow ups.
It follows from Theorem \ref{thm:degeneration}
that all 
nodal del Pezzo threefolds of degrees $4$, $5$ and $6$
can be produced by this construction.
In general, it 
is not clear which nodal threefolds can be
obtained from smooth ones by such nodal blow ups and blow downs, which is a question
interesting in its own right, as well as in relation
to degenerations of intermediate Jacobians
and 
to rationality problems. 
Furthermore, it seems to us that existence
of Kawamata decompositions has to do with the
lci Sarkisov links as explained above, similarly
to how full exceptional collections are
supposed to correspond to varieties which are rational, or close to rational.
We hope to return to these questions
in the future.

\subsection*{Relation to other work.}
The main geometric input in this paper, 
the idea of blowing up singular curves to produce Gorenstein terminal threefolds, belongs to Yuri Prokhorov \cite{Prokhorov-G-Fano, 
Prokhorov-singular}.
The study of semiorthogonal decompositions of singular varieties
has been initiated by Kuznetsov and Kawamata,
see \cite{Kawamata-CY, Kuznetsov-sextics, KKS, Kawamata-V6, Xie-quintics, Kalck-Pavic-Shinder,
Kawamata-P1113, kawamata-smoothing}
for recent developments.
The case of nodal del Pezzo threefolds of degree $5$ has been
independently done by Fei Xie \cite{Xie} by analyzing small resolutions rather than performing 
nodal blow ups. The method
of \cite{Xie} is quite different to ours as
we do not resolve singularities but preserve
them in birational transformations.
A more general notion than Kawamata semiorthogonal decomposition
is the so-called categorical absorption \cite{KS-absorption}, \cite{KS-fano}, \cite{KKP}.

\subsection*{Acknowledgements}
E.S. was partially supported by the
EPSRC grant EP/T019379/1 Derived categories and algebraic $\Kt$-theory of singularities.
We thank Ivan Cheltsov, Martin Kalck, Alexander Kuznetsov, 
Yuri Prokhorov, Michael Wemyss,
Fei Xie for discussions and interest in our work.
We thank Alice Rizzardo and Theo 
Raedschelders for organizing
the 2019
Liverpool workshop ``The Geometry of Derived Categories'' where our
previous work \cite{Kalck-Pavic-Shinder}
has been presented
and this work
has been conceived.

\subsection*{Conventions and notation.}
We work over an algebraically
closed field $k$  of characteristic 
zero. 
Unless specified otherwise, all our varieties are quasiprojective.
By $\Db(X)$ we mean
the bounded derived category
of coherent sheaves on $X$
and $\Dperf(X)$ is the subcategory
of $\Db(X)$ consisting of perfect complexes.
More generally, we denote by $\Db(X, \RR)$ the
bounded derived category of coherent sheaves of
right $\RR$-modules,
for a locally free sheaf $\RR$ of algebras on $X$.
All functors such as pull-back $\pi^*$, pushforward $\pi_*$ and tensor product $\otimes$ when considered between derived categories are derived functors.

By letters $D$, $E$, $W$, $X$, $Y$ and similar in the standard font
we denote $k$-algebraic varieties.
By letters $\DD$, $\EE$, $\WW$, $\XX$, $\YY$ 
in typed in boldface we denote
proper and flat (but not always smooth) families over a smooth base $S$.
For $b \in S$, we write $\DD_b$, $\EE_b$ and so on for
the corresponding fiber.
By calligraphic
letters $\AA$, $\BB$, $\mathcal{C}$, $\TT$ 
we denote triangulated categories,
$\FF$, $\GG$ usually denote complexes of coherent
sheaves
and $\RR$ usually stands for a locally free
sheaf of algebras.
In the case when $\FF$
is a complex
of coherent sheaves on an $S$-variety $\XX$
we write $\FF_b$ for the derived
restriction of $\FF$ to $\XX_b$.

\section{Families of del Pezzo threefolds}

\subsection{Classification of del Pezzo threefolds}

By a del Pezzo threefold we mean a projective 
threefold 
$X$ with Gorenstein terminal singularities, 
such that $-K_X = 2H$,
for an ample line bundle
$H \in \Pic(X)$. 
By \cite[(1.1)]{Reid-minimal}, threefold Gorenstein terminal
singularities are precisely the
isolated compound du Val singularities.
A typical example are the $cA_n$ singularities
in which case
complete local rings $\wh{\OO}_{X,x}$, $x \in \Sing(X)$ are 
isomorphic
to hypersurface
singularities
\begin{equation}\label{eq:cDV}
k[[x,y,z,w]] / (x^2 + y^2 + g(z,w)) 
\end{equation}
with $g(z,w)$ a polynomial
having no repeated factors (that is $g(z,w)$
defines a reduced curve singularity).
We will pay particular attention to ordinary double
points, which is the case $g(z,w) = z^2 + w^2$,
and we call projective threefolds with isolated
ordinary double points nodal threefolds.

If $X$ is a del Pezzo threefold,
the integer $d = H^3$ is called the degree of $X$,
and we write $V_d$ for a del Pezzo threefold of degree $d$.

\begin{theorem}[Iskovskikh, Fujita] \cite[Theorem 3.3.1]{IskovskikhProkhorov}
\label{thm:delPezzo}
Del Pezzo threefolds are exactly the following:
\begin{itemize}
    \item $V_1 \subset \P(1,1,1,2,3)$ a degree $6$ hypersurface
    \item $V_2 \subset \P(1,1,1,1,2)$ a degree $4$ hypersurface (equivalently, a double cover of $\P^3$ branched in a quartic)
    \item $V_3 \subset \P^4$ cubic hypersurface
    \item $V_4 \subset \P^5$ intersection of two quadrics
    \item $V_5 \subset \P^6$ codimension three linear section of $\mathrm{Gr}(2,5) \subset \P^9$
    \item $V_6 \subset \P^7$ 
    hyperplane section of $\P^2 \times \P^2 \subset \P^8$
    \item $V_6' = (\P^1)^3$
    \item $V_7 = \Bl_p(\P^3)$, the blow up of a point  $p \in \P^3$
    \item $V_8 = \P^3$
\end{itemize}
\end{theorem}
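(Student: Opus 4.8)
The plan is to follow the classical route of Iskovskikh (smooth case) and Fujita (general case) through the theory of the $\Delta$-genus; I only sketch the main steps, since the statement is quoted from the literature.

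First I would extract the numerical input. Since $\chr(k)=0$ and $X$ has canonical (even terminal) singularities, $X$ has rational singularities, so $\chi(\OO_X)=1$ and $H^i(X,\OO_X)=0$ for $i>0$. For $m\ge 0$ the divisor $mH-K_X=(m+2)H$ is ample, so Kawamata--Viehweg vanishing gives $H^i(X,mH)=0$ for $i>0$. Computing Riemann--Roch on a resolution and substituting $-K_X=2H$ yields $h^0(X,mH)=\tfrac{m^3 d}{6}+\tfrac{m^2 d}{2}+\tfrac{m(d+3)}{3}+1$; in particular $h^0(X,H)=d+2$, so the $\Delta$-genus $\Delta(X,H)=\dim X+H^3-h^0(X,H)=1$. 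Restricting $H$ to a general surface section $S\in|H|$, adjunction gives $K_S=(K_X+H)|_S=-H|_S$, so $S$ is a del Pezzo surface of degree $d$ with at worst du Val singularities; this already forces $1\le d\le 9$, with $d=9$ only when $S\cong\P^2$ re-embedded by $\OO(3)$. One rules out $d=9$ for threefolds by a normal bundle/dimension count, and separately records that $d=7,8$ are automatically smooth and rigid.

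Next I would analyze $\varphi=\varphi_{|H|}$. Lifting sections from surface sections and using the vanishing above, one shows that for $d\ge 3$ the system $|H|$ is very ample, embedding $X\hookrightarrow\P^{d+1}$ as a projectively normal threefold of degree $d$ and sectional genus $1$ (a del Pezzo variety in Fujita's sense). The substance is then the projective classification of such threefolds, degree by degree, carried out by studying the homogeneous ideal and, in particular, the space of quadrics through $X$, whose dimension $h^0(\P^{d+1},\OO(2))-h^0(X,2H)$ equals $0,2,5,9$ for $d=3,4,5,6$: for $d=3$ the ideal is a single cubic, giving $V_3\subset\P^4$; for $d=4$ it is a pencil of quadrics and $X$ is their base locus $V_4\subset\P^5$; for $d=5$ the five quadrics are the $4\times 4$ Pfaffians of a skew $5\times 5$ matrix of linear forms, identifying $X$ with a linear section of $\mathrm{Gr}(2,5)\subset\P^9$; for $d=6$ one gets a hyperplane section of $\P^2\times\P^2\subset\P^8$ or the Segre threefold $(\P^1)^3\subset\P^7$; and $d=7,8$ give $\Bl_P(\P^3)$ and $\P^3$ re-embedded by $\OO(2)$. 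I expect this degree-by-degree projective analysis, and especially the Pfaffian description producing $V_5$, to be the main obstacle, as it is the technical core of Fujita's classification.

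Finally I would treat $d=1,2$, where $H$ is not very ample. For $d=2$ the system $|H|$ is base-point free and $\varphi_{|H|}\colon X\to\P^3$ is a finite double cover; its branch locus is a quartic surface, and the relative $\Proj$ of $\bigoplus_m H^0(X,mH)$ realizes $X$ as a degree-$4$ hypersurface in $\P(1,1,1,1,2)$. For $d=1$ the system $|H|$ has a single base point, while $|2H|$ is base-point free and maps $X$ onto a quadric cone; the same $\Proj$ construction on the section ring, now with generators in degrees $1,1,1,2,3$ and one relation in degree $6$, realizes $X$ as $V_1\subset\P(1,1,1,2,3)$. Assembling the cases $d=1,\dots,8$ produces the stated list.
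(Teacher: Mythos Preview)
The paper does not prove this theorem at all: it is stated with the citation \cite[Theorem 3.3.1]{IskovskikhProkhorov} and used as a black box. So there is no ``paper's own proof'' to compare against; the authors simply import the classification from the literature.

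Your sketch is a faithful outline of the classical Iskovskikh--Fujita argument that lies behind that citation: the computation of $h^0(X,mH)$ via Riemann--Roch and Kawamata--Viehweg, the identification of $\Delta(X,H)=1$, the del Pezzo surface section bounding $d$, very ampleness for $d\ge 3$ with the degree-by-degree projective analysis (culminating in the Pfaffian description for $d=5$), and the section-ring/weighted-projective treatment for $d=1,2$. As a roadmap this is correct and matches what one finds in Fujita's papers and in \cite{IskovskikhProkhorov}. The places you flag as delicate---ruling out $d=9$, the syzygetic analysis producing $\mathrm{Gr}(2,5)$ for $d=5$, and distinguishing the two degree-$6$ types---are indeed where the real work sits, and your sketch appropriately does not pretend to fill them in. For the purposes of this paper, however, none of this is needed: a one-line reference suffices, which is exactly what the authors do.
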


\begin{remark}\label{rem:Vd-sing}
We note that $V_1$, $V_2$ 
can not pass through singular points of the ambient
weighted projective space; indeed, since 
our del Pezzo threefolds have only compound du Val
singularities hence
in particular
hypersurface singularities, 
the tangent space
at each point has dimension at most $4$,
hence 
the dimension of the tangent space
of the ambient space 
at this point should be at most $5$,
which is not the case for singular points of $\P(1,1,1,2,3)$
and $\P(1,1,1,1,2)$.
\end{remark}

By Bertini's Theorem, a 
general element $S \in |H|$
will be a smooth del Pezzo 
surface of degree $d$
which is convenient e.g. when analyzing
the Hilbert scheme
of lines on $V_d$.
The last three varieties $V_6', V_7, V_8$ are smooth,
and will not be of interest to us. The other types can
be nodal.
Singularities of del Pezzo threefolds,
in particular the maximal number of nodes
they can have, have
been studied by Prokhorov 
\cite{Prokhorov-G-Fano}
(see also Corollary \ref{cor:singularities}).

\subsection{Class groups of del Pezzo threefolds $V_1$, $V_2$, $V_3$}
We recall the relationship between Weil and Cartier
divisors on nodal threefolds.
For any nodal threefold $X$ we have an
exact sequence
\begin{equation}\label{eq:defect-seq}
0 \to \Pic(X) \to \Cl(X) \to \Z^{\Sing(X)}.
\end{equation}
Here the last map is given by 
restricting a Weil
divisor to the local class groups
of the singular points, see e.g. \cite[Corollary 3.8]{Kalck-Pavic-Shinder}.
In particular the quotient
$\Cl(X)/\Pic(X)$ is a free abelian group of finite rank
and
\[
\delta := \rk \; (\Cl(X) / \Pic(X))
\]
is called \emph{defect} of $X$.
It follows from \eqref{eq:defect-seq}
that $\delta \le |\Sing(X)|$;
if $\delta = |\Sing(X)|$ we say
that $X$ has \emph{maximal defect}.

We say that $X$ is \emph{maximally nonfactorial}
if the map $\Cl(X) \to \Z^{\Sing(X)}$ from 
\eqref{eq:defect-seq} is surjective.
It is clear that maximal nonfactoriality of $X$ 
implies that $X$ has maximal defect,
and the converse implication holds for Fano threefolds \cite[Proposition A.14]{KS-fano}.

\begin{remark}
The importance of the maximal nonfactoriality condition
is that it is a simple necessary condition
for existence of a Kawamata decomposition
for derived categories of nodal threefolds \cite{Kalck-Pavic-Shinder}, and it is also a necessary condition for a more general notion of categorical absorption \cite{KS-absorption}. 
Furthermore, we will show
that for nodal del Pezzo threefolds
this condition is also sufficient,
see Corollary \ref{cor:main}.
\end{remark}

We now use standard
computations of defect
to rule out del Pezzo threefolds of small degree
from being maximally nonfactorial.

\begin{proposition}[\cite{cynk,Rams}]\label{prop:cynk}
Let $W$ be a projective toric fourfold
with at most isolated quotient singularities,
and let $X \subset W$ be a (non smooth) nodal hypersurface
which does not intersect the singular locus of $W$.
Assume that $\OO_W(X)$ is ample
and that $\omega_W \otimes \OO_W(2X)$ is basepoint-free in the nonsingular locus of $W$.
Then $X$ does not have maximal defect.
\end{proposition}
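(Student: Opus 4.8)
The plan is to compute the defect $\delta$ of $X$ via a Koszul/exact-sequence argument relating $\Cl(X)$ to the cohomology of the ambient fourfold $W$. I would begin by passing to a resolution: let $\pi : \wt X \to X$ be a small resolution of the nodes (each ordinary double point of a threefold admits a small resolution after a finite base change, or at least $\wt X$ exists as an analytic/algebraic space, which suffices for the rank computation), so that $\Cl(X) \otimes \Q \cong \Pic(\wt X) \otimes \Q$ and the defect is measured by $\rk H^2(\wt X) - \rk H^2(X_{\mathrm{sm-model}})$, where $X_{\mathrm{sm-model}}$ is a general (smooth) member of $|{\OO}_W(X)|$. Equivalently, following Cynk and Rams, the defect of a nodal hypersurface $X$ with node set $\Sigma$ equals the failure of the points of $\Sigma$ to impose independent conditions on the linear system $|\omega_W \otimes {\OO}_W(2X)|$ — more precisely $\delta = |\Sigma| - \mathrm{codim}$ of the subsystem of elements singular along $\Sigma$, computed inside $H^0(W, \omega_W \otimes {\OO}_W(2X))$. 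So the first key step is to set up this equality carefully in the toric setting, using that $X$ avoids $\Sing(W)$ so that all the relevant local computations (adjunction, the conormal sequence, the local structure of the node) take place in the smooth locus of $W$ where $W$ is an honest manifold.

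The second step is the Koszul-theoretic heart: one shows that if the points of $\Sigma$ did fail to impose independent conditions on $|\omega_W \otimes {\OO}_W(2X)|$, then since this system is basepoint-free on $W^{\mathrm{sm}}$ one can still separate any one node from the rest. Concretely, basepoint-freeness of $\omega_W \otimes {\OO}_W(2X)$ gives, for each node $p \in \Sigma$, a section vanishing at all of $\Sigma \setminus \{p\}$ but not at $p$ — this is the standard "general basepoint-free linear system separates a finite set one point at a time" argument (intersect with general members through the other points). Hence the nodes \emph{do} impose independent conditions, and therefore $\delta < |\Sigma|$, i.e. $\delta$ is not maximal. The point where ampleness of ${\OO}_W(X)$ enters is in guaranteeing that $X$ is connected and that the Lefschetz-type vanishing $H^1(W, {\OO}_W(X)) = H^1(W, {\OO}_W) = 0$ etc. hold, so that $\Pic(X)$ (and $\Pic(X_{\mathrm{sm-model}})$) is computed by restriction from $W$ with no correction terms; this is what pins down the "expected" Picard rank against which the defect is measured.

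The third step is to translate "impose independent conditions on $|\omega_W \otimes {\OO}_W(2X)|$" into the defect via the double-point/adjoint formalism: a node of $X$ at $p$ contributes to $\Cl(X)/\Pic(X)$ exactly when the local equation $x^2+y^2+z^2+w^2$ of the node cannot be "adjoined", and the obstruction lives in $H^1$ of the ideal sheaf of $\Sigma$ twisted by the adjoint bundle $\omega_W \otimes {\OO}_W(2X)|_X = \omega_X \otimes {\OO}_X(X)$; one identifies $\delta$ with $h^1(\II_\Sigma \otimes (\text{adjoint bundle}))$, and then the surjectivity of $H^0(\text{adjoint bundle}) \to H^0({\OO}_\Sigma)$ — which is exactly the "independent conditions" statement from step two — forces this $h^1$ to drop below $|\Sigma|$. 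I expect the main obstacle to be making the adjoint-ideal bookkeeping precise on a singular (but only finite-quotient-singular, and smooth near $X$) toric fourfold $W$: one must check that $\omega_W$ is a well-defined line bundle in a neighbourhood of $X$, that adjunction $\omega_X \cong \omega_W \otimes {\OO}_W(X)|_X$ holds there, and that the toric vanishing theorems (Bott–Danilov–Steenbrink type) apply to kill the ambient $H^1$'s despite the quotient singularities of $W$; once these are in place the argument is exactly that of \cite{cynk, Rams}, so the right move is to cite their computation and only spell out the verification that our hypotheses (ample ${\OO}_W(X)$, basepoint-free adjoint, $X \cap \Sing(W) = \emptyset$) put us in the scope of their results.
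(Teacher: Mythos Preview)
Your overall strategy matches the paper's: invoke the Cynk--Rams formula expressing the defect as
\[
\delta = \mu - \bigl(h^0(W,L) - h^0(W, L \otimes I_{\Sigma})\bigr), \qquad L = \omega_W \otimes \OO_W(2X),
\]
and then use basepoint-freeness of $L$ on the smooth locus to conclude $\delta < \mu$. The paper's proof is literally those two sentences (with the formula cited as \cite[(4.2) and Corollary 4.2]{Rams}), so your step 1 and the intended conclusion are exactly right.

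However, your step 2 is wrong as stated, and it overshoots. Basepoint-freeness of $|L|$ does \emph{not} in general give, for each $p \in \Sigma$, a section vanishing on $\Sigma \setminus \{p\}$ and nonzero at $p$; that would require the points of $\Sigma$ to impose independent conditions on $|L|$, which is precisely the statement you are trying to prove, not a consequence of basepoint-freeness. (Think of a basepoint-free pencil: for a given $p$ there is up to scalar a unique section nonvanishing at $p$, and you have no control over its behaviour at the other points of $\Sigma$.) The ``intersect with general members through the other points'' maneuver does not help either, since imposing vanishing at $\Sigma \setminus \{p\}$ may already force vanishing at $p$. If your argument worked it would give $\delta = 0$, which is false for many nodal hypersurfaces satisfying the hypotheses.

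The fix is that you need far less: to get $\delta < \mu$ from the displayed formula you only need $h^0(W,L) > h^0(W, L \otimes I_{\Sigma})$, i.e.\ the existence of a single section of $L$ not vanishing at \emph{some} node. Basepoint-freeness at any one point of $\Sigma$ gives this immediately. Your step 3 is then unnecessary: once the formula is granted and this single inequality is observed, the proof is complete, exactly as the paper does it.
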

\begin{proof}
Let $\mu$ be the number of singular points
of $X$, and $\delta$ be the defect.
Then by \cite[(4.2) and Corollary 4.2]{Rams} 
\[
\delta = \mu - (h^0(W, L) - h^0(W, L \otimes I_{\Sing(X)})),
\]
where $L = \omega_W \otimes \OO_W(2X)$.
Since $L$ is assumed to be basepoint free
on $W \setminus \Sing(W)$ and 
$\Sing(X) \subset W \setminus \Sing(W)$, we have
$h^0(W, L \otimes I_{\Sing(X)}) < h^0(W, L)$
so that $\delta < \mu$, and $X$ does not
have maximal defect.
\end{proof}

\begin{corollary}\label{cor:CynkRams}
Nodal (non smooth) $V_1$, $V_2$, $V_3$ do not have
maximal defect.
\end{corollary}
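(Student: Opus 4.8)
The plan is to deduce Corollary \ref{cor:CynkRams} by applying Proposition \ref{prop:cynk} to each of the three cases $V_1 \subset \P(1,1,1,2,3)$, $V_2 \subset \P(1,1,1,1,2)$ and $V_3 \subset \P^4$ separately. In each case I must exhibit the relevant toric fourfold $W$, check that $\OO_W(X)$ is ample, check that $X$ avoids $\Sing(W)$, and verify that $L := \omega_W \otimes \OO_W(2X)$ is basepoint-free away from $\Sing(W)$. For $V_3 \subset \P^4$ the fourfold $W = \P^4$ is already smooth, so the hypothesis about avoiding $\Sing(W)$ is vacuous; here $\OO_W(X) = \OO_{\P^4}(3)$ is ample, $\omega_{\P^4} = \OO_{\P^4}(-5)$, so $L = \OO_{\P^4}(-5 + 6) = \OO_{\P^4}(1)$, which is globally generated, and Proposition \ref{prop:cynk} applies directly.

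For $V_1$ and $V_2$ the ambient spaces $W = \P(1,1,1,2,3)$ and $W = \P(1,1,1,1,2)$ are toric fourfolds with isolated quotient (hence abelian quotient) singularities, so they fit the hypotheses of Proposition \ref{prop:cynk}. By Remark \ref{rem:Vd-sing} a nodal del Pezzo threefold of type $V_1$ or $V_2$ cannot pass through the singular points of the ambient weighted projective space, so $X \cap \Sing(W) = \emptyset$ in both cases. The line bundles $\OO_W(X)$ are $\OO_W(6)$ and $\OO_W(4)$ respectively, which are ample on a weighted projective space. It remains to compute $L$: the dualizing sheaf of $\P(a_0,\dots,a_n)$ is $\OO(-\sum a_i)$, so for $W = \P(1,1,1,2,3)$ we get $\omega_W = \OO_W(-8)$ and $L = \OO_W(-8 + 12) = \OO_W(4)$, while for $W = \P(1,1,1,1,2)$ we get $\omega_W = \OO_W(-6)$ and $L = \OO_W(-6 + 8) = \OO_W(2)$.

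The main obstacle is the basepoint-freeness of $L$ in the nonsingular locus of $W$. On an ordinary projective space this is automatic, but on a weighted projective space $\OO_W(m)$ can fail to be globally generated for small $m$ relative to the weights — its sections are spanned by monomials of weighted degree $m$, and a point where every such monomial vanishes would be a base point. For $\P(1,1,1,2,3)$ with $m = 4$, the monomials of degree $4$ in variables of weights $(1,1,1,2,3)$ include $x_0^4, x_0^2 x_3, x_3^2$ and so on; I would argue that the common zero locus of all degree-$4$ monomials is contained in $\Sing(W)$ (which consists of the two coordinate points of weights $2$ and $3$), so $L$ is basepoint-free on $W \setminus \Sing(W)$. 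Similarly for $\P(1,1,1,1,2)$ with $m = 2$: the degree-$2$ monomials are the $x_i^2$ and $x_i x_j$ for the four weight-$1$ variables together with $x_4$ itself; their common zero locus is the single coordinate point of weight $2$, which is exactly $\Sing(W)$. In each case one then invokes Proposition \ref{prop:cynk} to conclude that nodal non-smooth $V_1$, $V_2$, $V_3$ do not have maximal defect. \qed
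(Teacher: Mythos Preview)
Your proof is correct and follows exactly the paper's approach: apply Proposition~\ref{prop:cynk} case by case, invoking Remark~\ref{rem:Vd-sing} for the condition $X \cap \Sing(W) = \emptyset$ and computing $L = \omega_W \otimes \OO_W(2X)$ explicitly. One small slip: in the $V_2$ case you say the common zero locus of all degree-$2$ monomials on $\P(1,1,1,1,2)$ is the weight-$2$ point, but since $x_4$ itself is a degree-$2$ monomial and does not vanish there, the base locus of $|\OO(2)|$ is actually empty; this only strengthens your conclusion, so the argument goes through unchanged.
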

\begin{proof}
By Theorem \ref{thm:delPezzo}, in each case $X = V_d$ is a hypersurface in 
a weighted projective space:
\begin{center}
\begin{tabular}{|c|c|c|c|c|}
    \hline
     $d$ & $W$ & $\OO_W(X)$ & $\omega_W$ & $\omega_W \otimes \OO_W(2 X)$\\
    \hline
     $1$ & $\P(1,1,1,2,3)$ & $\OO(6)$ & $\OO(-8)$ & $\OO(4)$ \\ 
    \hline
     $2$ & $\P(1,1,1,1,2)$ & $\OO(4)$ & $\OO(-6)$ & $\OO(2)$ \\ 
    \hline
     $3$ & $\P^4$ & $\OO(3)$ & $\OO(-5)$ & $\OO(1)$ \\ 
    \hline
\end{tabular}
\end{center}
Here $\OO_W(X)$ is ample and $\omega_W \otimes \OO_W(2X)$ is basepoint free on the nonsingular
locus of $W$ (the only basepoint is $[0:0:0:0:1]$
in $d = 1$ case).
By Remark \ref{rem:Vd-sing}, $X$ does not pass through
the singular points of $W$.
All the conditions of Proposition
\ref{prop:cynk} are verified and the result follows.
\end{proof}

\begin{remark}
It follows from the proof of Proposition \ref{prop:cynk}
that $|\Sing(X)| - \delta$ equals
the number of linearly independent conditions
that the points from $\Sing(X)$ impose
on forms of degree $e$ on the corresponding
weighted projective space, where
$e = 4$ for $V_1$, $e = 2$ for $V_2$
and $e = 1$ for $V_3$.
\end{remark}

Note that this 
approach of computing
the defect does not apply to nodal
intersections of two singular
quadrics $V_4$.
We will see in Corollary
\ref{cor:main} that nodal 
$V_4$ never have maximal defect,
while nodal 
$V_5$, $V_6$ are always maximally nonfactorial
(hence have maximal defect).
In the case of $V_4$, its defect
can be read off from the associated
curve of arithmetic genus two, see
Theorem \ref{thm:degeneration}
and proof of Corollary \ref{cor:main}.

\subsection{Geometry of 
del Pezzo threefolds $V_4$, $V_5$, $V_6$}

The key construction in the geometry of del Pezzo
threefolds
is projection from a line $L \subset X$ which has been
used by Iskovskikh \cite{Iskovskikh}
in the classification in the smooth case.

\begin{definition}
Let $V$ be a del Pezzo threefold.
We call a line $L \subset V$
a \emph{standard line} if the following conditions
hold
\begin{enumerate}
    \item[(a)] $L$ is contained in the smooth locus of $V$
    \item[(b)] $L$ is not contained in any plane $\Pi \subset V$
    \item[(c)] $\NN_{L/V} \simeq \OO_L^2$
\end{enumerate}
\end{definition}
 
 For (b), see \cite[3.7, 3.7.1]{Prokhorov-G-Fano} for planes on del Pezzo threefolds;
 for degree $d \ge 3$ a plane is simply a $\P^2$ embedded
 linearly in the ambient space. 

The following example shows that for nodal del Pezzo threefolds the scheme of lines
is in general reducible.  
Reducibility of the 
Fano variety of lines 
on singular cubic threefolds in relation to defect 
have been
recently studied 
\cite{Marquand-Viktorova}.

\begin{example}
Let $V$ be a $3$-nodal
del Pezzo threefold of degree $5$. Then $V$ has a small
resolution $\wh{V}$ which is isomorphic to $\Bl_{3}(\P^3)$,
the blow up of $\P^3$ in $3$ general points $P_1, P_2, P_3$: the morphism
$\wh{V} \to V$ is given by the linear
system $|2H-P_1-P_2-P_3|$.
It
contracts three lines passing through pairs
of 
points $P_i, P_j$ \cite[7.1, 7.4]{Prokhorov-G-Fano}.
Lines on $V$ correspond to smooth
rational curves $C \subset \wh{V}$
with the property $C \cdot (2H-E_1-E_2-E_3) = 1$ where
$E_i \subset \wh{V}$ are exceptional divisors
of the blow up.
These curves
can be of the following types:
\begin{itemize}
    \item proper preimages of lines in $\P^3$ passing through
    exactly one of the points $P_i$ 
    \item proper preimages of conics in $\P^3$ passing through
    $P_1, P_2, P_3$
    \item lines on one of the exceptional divisor
    $\P^2 \simeq E_i \subset \wh{V}$
\end{itemize}
General lines of the first type are standard
and the lines of the second
and the third type are contained in a plane (see \cite[7.2]{Prokhorov-G-Fano} for the list
of planes on $V$),
so they are not standard.
\end{example}

We present projection from a line construction
for a degeneration of 
del Pezzo threefolds:

\begin{definition}\label{def:dP-fibration}
A del Pezzo 
threefold fibration 
is a flat proper morphism 
$f: \VV \to S$ 
with smooth $\VV$ and $S$,
such that each fiber $\VV_b$ is a del Pezzo threefold
with at most Gorenstein terminal singularities,
and such that
there exists $H \in \Pic(\VV)$
which restricts to ample generator of $\Pic(\VV_b)$ for each $b \in S$.
\end{definition}

We call a subscheme $\LL \subset \VV$ a \emph{standard
family of lines}
if $\LL \to S$ is a $\P^1$-bundle and
for all $b \in S$
 $\LL_b \subset \VV_b$ is a standard line.

\begin{lemma}\label{lem:standardline1}
(i) Any del Pezzo threefold $V$ of degree $3 \le d \le 6$ with terminal
Gorenstein singularities 
contains a standard line.

(ii) A del Pezzo threefold $V$ as in (i)
with a standard line
$L \subset V$
can be included as a central fiber
into a del Pezzo fibration
$\VV \to S$ admitting a standard family of lines $\LL \to S$
with $\LL_0 = L$.
\end{lemma}
\begin{proof}
(i) Let us show that there exists a line $L \subset V$
in the smooth locus, and not contained in any plane $\Pi \subset V$.
Taking a general hyperplane section of $V$ not passing through
singular points of $V$ gives rise to a smooth del Pezzo
surface $S \subset V$. 
Let $i: S \to V$ be the embedding
morphism; since $S$ lies in the smooth locus of $V$,
we have a restriction homomorphism $i^*: \Cl(V) \to \Pic(S)$.
If $\Pi \subset V$ is a plane, and $L \subset S \cap \Pi$, 
then $L = i^*(\Pi)$ by degree reasons.
It follows from \cite[Corollary 3.9.2]{Prokhorov-G-Fano} that
$i^*$ is
not surjective, and since $\Pic(S)$ is generated
by lines $L \subset S$, there exists a line
not contained in any plane $\Pi \subset V$.

We have a short exact sequence of normal bundles
\[
0 \to \OO_L(-1) \to \NN_{L/V} \to \OO_L(1) \to 0. 
\]
Since vector bundles on $L \simeq \P^1$ split into a direct sum of line
bundles, $\NN_{L/V}$ is either $\OO \oplus \OO$
or $\OO(-1) \oplus \OO(1)$.
In both cases $h^0(\NN_{L/V}) = 2$, $h^1(\NN_{L/V}) = 0$
so that $L$ corresponds to a smooth point of a two-dimensional
irreducible component $T \subset F(V)$, where $F(V)$ is the Hilbert scheme of lines on $V$.

We claim that lines parametrized by 
$t \in T$ cover $V$.
Indeed, otherwise they would cover
a surface on $V$, 
and the only integral surface
containing a two-parameter family
of lines is a plane (because
a general point on this surface will be
its vertex).
This is impossible
since by construction
$L$ does not lie on a plane $\Pi \subset V$.

We now compute the normal bundle for a general line
following a method of Iskovskikh.
Let $P$
be the universal line over $T$
so
that we have a diagram
\[
\xymatrix{
T & P \ar[l]_p \ar[r]^\phi & V \\
}
\]
where $\phi$ is generically finite and $p$ is a $\P^1$-bundle.
We claim that $\phi$ is etale
on $p^{-1}(T \setminus D)$
where $D \subset T$ is a divisor.
Let $T^\circ$ (resp. $P^\circ$)
be the smooth locus of $T$ (resp. $S$).
We have $K_{P^\circ} = -2H + R$,
where $R$ is a divisor
supported at the ramification locus of $\phi$.
On the other hand, by the formula
for canonical class of a projective
bundle
$K_{P^\circ} = p^*(K_{T^\circ}(\det(\EE))) - 2H$,
and comparing the two expressions
for $K_{P^\circ}$ we deduce
that $R$ is supported over 
$p^{-1}(D)$, for some divisor $D$.
After enlarging $D$ 
to include $\Sing(T)$,
we obtain
that $\phi$ is etale away
from $p^{-1}(D)$.

Now a general line $L$ parametrized
by $t \in T$ does not
pass through singular points of $V$,
and $\phi$ is etale over $L$.
It follows that
\[
\NN_{L/V} \simeq \NN_{p^{-1}(t)/S} = \OO_L \oplus \OO_L,
\]
which finishes the proof.

(ii) Consider a general four-dimensional
del Pezzo fourfold $\ol{\VV}$ of degree $d$ containing $V$
(see Theorem \ref{thm:delPezzo}).
A general hyperplane section
$V'$ of $\ol{\VV}$
passing through $L$ will be smooth
and not containing any
singular points of $V$. 
Hence blowing up the base locus
of a pencil of such hyperplane sections
we obtain a flat proper morphism
$\VV := \Bl_{V \cap V'}(\ol{\VV}) \to \P^1$
with a smooth total space,
and removing singular
fibers other than $V$ will give the required degeneration.
\end{proof}

\begin{theorem}
\label{thm:degeneration}
Let $\VV \to S$ be a del Pezzo threefold
fibration
of degree $4 \le d \le 6$.
Let $\LL \subset \VV$ be a standard family
of lines.
Let $\YY$ be the blow up $\YY = \Bl_{\LL}(\VV)$ 
and $\EE \subset \YY$
be the exceptional divisor.

Then the line bundle $\OO(H - \EE)$ is relatively globally generated.
Let $\pi: \YY \to \WW$ be the induced surjective morphism
and let
$\QQ = \pi(\EE)$.
Then $\pi|_\QQ: \QQ \to S$
is a smooth two-dimensional
quadric fibration contained in the smooth locus of $\pi$.
Furthermore, there exists a
smooth subscheme $\CC \subset \QQ$ flat
and proper over $S$ 
such that $\pi: \YY \to \WW$ is the blow up
of $\WW$ along $\CC$.
The possibilities for $\WW$ and $\CC$
are given in the table:

\smallskip

\begin{center}
\begin{tabular}{|c|c|c|c|}
    \hline
    $d$    & $\WW \to S$ & bidegree of $\CC_b \subset \QQ_b$ & description of $\CC_b$ \\
    \hline
     $4$ &  $\P^3$-fibration & $(2,3)$ & arithmetic genus two curve \\
     \hline
     $5$ &  $Q^3$-fibration & $(1,2)$ & generalized twisted cubic \\
     \hline
     $6$ &  $\P^1 \times \P^2$-fibration & $(1,1)$ & conic \\
    \hline
     $6$ &  $\P^1 \times \P^2$-fibration & $(0,2)$ & two disjoint lines  \\
    \hline
\end{tabular}
\end{center}
\smallskip

The $Q^3$-fibration $\WW \to S$
in the case $d = 5$ has smooth or nodal fibers.
If $\VV_b$ is smooth, then 
the curve $\CC_b$ and the threefold $\WW_b$ are also smooth.
In the degree $d = 6$ case, in the notation
of Theorem~\ref{thm:delPezzo},
blowing up the $(1,1)$ curve
case corresponds to $V_6$ and blowing up the
$(0,2)$ curve
corresponds to $V_6'$.
\end{theorem}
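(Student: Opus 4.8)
=== BEGIN PROOF PROPOSAL ===

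\textbf{Approach.} The plan is to run the projection-from-a-line construction relatively over $B$, controlling everything fiberwise by the standard-line hypotheses and the degree bounds $4 \le d \le 6$. The main idea is classical (Iskovskikh): projecting $\VV_b$ from the line $\LL_b$ realizes $\Bl_{\LL_b}(\VV_b)$ as a blow up of a simpler threefold $\WW_b$ along a curve $\CC_b$; the exceptional divisor $\EE_b \cong \P^1 \times \P^1$ of the first blow up gets contracted in one ruling direction and becomes the quadric $\QQ_b$ in $\WW_b$ containing $\CC_b$. I would first establish this on a general (smooth) fiber, then show base-point-freeness of $|H - \EE|$ globally, and finally argue that the whole picture is flat over $B$ so the central fiber behaves as claimed.

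\textbf{Key steps.} (1) \emph{Base-point-freeness of $|H - \EE|$.} The linear system $|H - \EE|$ on $\YY$ restricts on each fiber $\YY_b = \Bl_{\LL_b}(\VV_b)$ to the strict transform of $|H - \LL_b|$, i.e. hyperplane sections of $\VV_b \subset \P^{d+1}$ through the line $\LL_b$. Since $\LL_b$ has normal bundle $\OO \oplus \OO$ and is not contained in any plane on $\VV_b$, a Bertini/dimension count shows the linear projection $\P^{d+1} \dashrightarrow \P^{d-1}$ from $\LL_b$ restricts to a morphism on $\Bl_{\LL_b}(\VV_b)$, hence $|H-\EE|$ is base-point-free fiberwise, hence on $\YY$ (using that $\LL_0$ lies in the smooth locus so the blow up $\YY$ is smooth and the central fiber is not worse than the general one). (2) \emph{Identifying $g$ as a blow up.} Let $g: \YY \to \WW$ be the resulting morphism over $B$. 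Degree bookkeeping: $\VV_b$ has degree $d$ in $\P^{d+1}$, projecting from a line drops the degree, and the image $\WW_b \subset \P^{d-1}$ has degree $d-2$, which forces $\WW_b$ to be $\P^3$ for $d=4$, a quadric $Q^3$ for $d=5$, and (a linear section giving) $\P^1 \times \P^2$ for $d=6$; in the last case the two sub-cases $(1,1)$ vs. $(0,2)$ correspond to whether the blow up produces $V_6$ or $V_6'$. One computes the fibers of $g$: away from $\EE$ it is an isomorphism, and $g$ restricted to $\EE \cong \P^1 \times \P^1$-bundle contracts one ruling, with image $\QQ \subset \WW$ a smooth two-dimensional quadric fibration in the smooth locus of $\WW$. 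The standard identification of projection-from-a-subvariety shows $g$ is the blow up of $\WW$ along a subscheme $\CC$, which is the locus in $\QQ$ over which the rational inverse map $\WW_b \dashrightarrow \VV_b$ is undefined — this is a curve because the Hilbert scheme of lines through a general point of $\VV_b$ meeting $\LL_b$ is finite. (3) \emph{The bidegree and geometry of $\CC_b$.} The bidegree of $\CC_b \subset \QQ_b \cong \P^1 \times \P^1$ is extracted from intersection numbers: one factor counts lines on $\VV_b$ meeting $\LL_b$ through a general point (the $2$ for $d=4,5$, resp. $1$ for $d=6$), the other is computed from the class of $\EE$ and the canonical bundle formula $K_{\YY} = g^*K_{\WW} + \EE'$ combined with $K_{\YY} = \pi^* K_{\VV} + \EE$ where $\pi: \YY \to \VV$. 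The resulting curve has arithmetic genus $2$ for $d=4$ (a $(2,3)$ curve on a quadric), arithmetic genus $0$ for $d=5$ (a $(1,2)$ curve, i.e. a generalized twisted cubic), and is a conic or pair of lines for $d=6$. Smoothness of $\CC_b$ for $b \ne 0$ follows from smoothness of $\VV_b$ and genericity of the standard family; flatness and properness of $\CC \to B$ and smoothness of $\CC$ as a scheme follow because $\YY$ and $\WW$ are smooth and $g$ is a blow up along $\CC$ (so $\CC$ is cut out by a regular sequence, being the blow up center, and the fibers have constant dimension). (4) \emph{The central quadric for $d=5$.} The quadric $\QQ_0 \subset \WW_0$ is smooth or nodal: it is the image of $\EE_0 \cong \P^1 \times \P^1$ contracted along one ruling into $\P^3 \subset \WW_0 = Q^3$; its singularities can only appear where the contraction degenerates, giving at worst one node, matched against the possible types of nodal $V_5$.

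\textbf{Main obstacle.} The genuinely delicate point is step (2)–(3) at the central fiber: verifying that $g$ is globally (over $B$, not just fiberwise) the blow up along a \emph{flat} center $\CC$, and that $\CC_0$ remains a curve of the stated arithmetic genus rather than acquiring embedded or lower-dimensional components. The subtlety is that $\LL_0$ lies in the smooth locus of $\VV_0$ but $\VV_0$ itself is singular, so one must check that no singular point of $\VV_0$ interferes with the projection — i.e. that the line $\LL_0$ not lying on any plane in $\VV_0$ genuinely forces the finiteness/curve statements even in the presence of the nodes, and that the blow up $\YY_0 = \Bl_{\LL_0}(\VV_0)$ still has Gorenstein terminal (indeed nodal, for the relevant cases) singularities so that the relative picture over $B$ is flat. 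I expect this to require a careful local analysis near the nodes of $\VV_0$, using that $\LL_0$ avoids them so that $\YY \to \VV$ is an isomorphism near the singular locus, and that the contraction $g$ likewise does not touch $\Sing(\VV_0)$; the constancy of Hilbert polynomials of $\CC_b$ then upgrades fiberwise statements to flatness.

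=== END PROOF PROPOSAL ===
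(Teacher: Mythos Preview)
Your overall strategy (run Iskovskikh's projection-from-a-line relatively over $B$) matches the paper's, but there are two genuine errors and one missing key ingredient.

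\textbf{The map $g$ does not contract $\EE$.} You repeatedly say that $g$ contracts one ruling of $\EE_b \cong \P^1 \times \P^1$ to produce the quadric surface $\QQ_b$. This is backwards. The linear system $|H-\EE|$ restricts to $\EE_b$ as $\OO(1,1)$, so $g|_{\EE_b}$ is an \emph{isomorphism} onto a smooth quadric surface $\QQ_b \subset \WW_b$; this is why $\QQ \to B$ is a smooth fibration. What $g$ contracts is a completely different divisor, namely the strict transforms of secant lines to $\LL_b$ in $\VV_b$ (these are the residual components of plane sections through $\LL_b$). Because $\VV_b$ is an intersection of quadrics for $d \ge 4$, such a residual component is at most a line, so every fibre of $g$ is a point or a $\P^1$. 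Your Step~4 then conflates two different quadrics: the surface $\QQ_0$ is always smooth, while it is the threefold $\WW_0 = Q^3$ (for $d=5$) that may be nodal, its node lying away from $\QQ_0$.

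\textbf{Degree of $\WW_b$.} The image has degree $(H-\EE_b)^3 = d-3$ in $\P^{d-1}$, not $d-2$; for $d=4$ this gives degree~$1$, i.e.\ $\WW_b = \P^3$, as you later assert. With the correct value, $\WW_b$ is a variety of minimal degree and the classification gives exactly the listed possibilities; the singular scroll and cone options for $d=6$ are then ruled out because $\WW_b \setminus \QQ_b \cong \VV_b \setminus \LL_b$ has at worst isolated cDV singularities.

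\textbf{Why $g$ is a smooth blow up.} Your ``standard identification of projection-from-a-subvariety'' and the constancy-of-Hilbert-polynomial argument do not, by themselves, give that the centre $\CC$ is a \emph{smooth} subscheme of $\WW$. The paper's mechanism is cleaner and resolves exactly the obstacle you flag: one first shows the total space $\WW$ is smooth (since $\WW \setminus \QQ \cong \VV \setminus \LL$ is smooth and $\QQ$ lies in the smooth locus of each $\WW_b$), and $\YY$ is smooth by construction. Then $g: \YY \to \WW$ is a projective birational morphism of smooth varieties with fibres of dimension $\le 1$, so by Danilov's decomposition theorem it is a composition of smooth blow ups; irreducibility of the fibres forces it to be a single blow up with smooth centre $\CC$. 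Smoothness of $\CC$ and flatness of $\CC \to B$ are then automatic, and the bidegree of $\CC_b \subset \QQ_b$ is read off from the normal bundle of $\QQ_b$ in $\WW_b$ via the identity $\NN_{\QQ_b/\WW_b}(-\CC_b) \cong \NN_{\EE_b/\YY_b} \cong \OO(0,-1)$.
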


The birational maps in Theorem~\ref{thm:degeneration}
are summarized in the following diagram
\begin{equation}\label{diag:bir-model-proj-line}
\xymatrix{
 & \EE \ar@{^{(}->}[r]^{i} \ar@{->}[ld]_p & \YY \ar@{<-^{)}}[r]^{j} \ar@{->}[ld]_{\sigma} \ar@{->}[rd]^{\pi} & \DD \ar@{->}[rd]^q & \\
 \LL \ar[drr] \ar@{^{(}->}[r] & \VV \ar[dr] &   & \WW \ar@{<-^{)}}[r] \ar[dl] & \CC \ar[dll] \\    
 & & S & &\\
}
\end{equation}

\begin{proof}
By generic smoothness, general fibers of $\VV \to S$ are smooth.
To simplify the notation let us assume that the only possibly 
singular fiber is $\VV_0$ over the point $0 \in S$.
Fiberwise the morphism $\pi$ resolves projection from the line
$\LL_b \subset \VV_b$ so that $H - \EE$ is base point free on $\YY$.
Furthermore, fibers of $\pi$ are residual components obtained
by intersecting $\VV_b$
with planes passing through $\LL_b$. 
Since $d \ge 4$, each del Pezzo threefold
$\VV_b$
is an intersection of quadrics \cite[Theorem 3.2.4(iii)]{IskovskikhProkhorov}, 
so that fibers of $\pi$ can be empty, consist
of a reduced point or isomorphic to $\P^1$. In other words for each $b \in S$, $\pi$ contracts
secant lines to $\LL_b \subset \VV_b$, and induces a birational
morphism $\YY_b \to \WW_b$. 
We have
$h^0(\YY_b, H-\EE_b) = h^0(\VV_b, I_{\LL_b}(H)) = d$
and using \cite[Lemma 2.2.14]{IskovskikhProkhorov} for smooth fibers
$\WW_b$
    \[
    \deg(\WW_b) = (H-\EE_b)^3 = H^3 - (3\cdot H + K_{\VV_b}) \cdot \LL_b + 2g - 2 = d - 3
    \]
so that $\WW_b$ is a subvariety of degree $d-3$ in $\P^{d-1}$; since $(H-\EE_b)^3$ is independent of $b$, 
the same is true for the singular fibers as well. 
As $\deg(\WW_b) = \codim(\WW_b) + 1$,
it is a so-called \emph{variety of minimal degree} and
there are only
the following possibilities for $\WW_b$ \cite[Theorem 2.2.11]{IskovskikhProkhorov}:
\begin{itemize}
    \item $d=4$: $\WW_b = \P^3$
    \item $d=5$: $\WW_b \subset \P^4$ 
    is a quadric (possibly singular)
    \item $d=6$: $\WW_b = \P^1 \times \P^2 \subset \P^5$ (the Segre embedding),
    a cone over a cubic scroll or
    a cone over a rational twisted cubic curve.
\end{itemize}

The linear system $|H - \EE|$ restricts to each $\EE_b \simeq \P^1 \times \P^1$ as $\OO(1,1)$, so that $\pi|_\EE$ is an isomorphism
onto a smooth quadric fibration $\QQ \subset \WW$.
Thus we have an isomorphism 
\[
\VV \setminus \LL \simeq \YY \setminus \EE \simeq \WW \setminus \QQ
\]
and in particular $\WW \setminus \QQ$ is smooth, 
each $\WW_b \setminus \QQ_b$ is smooth and
$\WW_0 \setminus \QQ_0$
has at most terminal Gorenstein singularities.
This rules out all types of singular $\WW_b$ except for a nodal quadric
threefolds
in the $d = 5$ case.
It follows that
$\QQ_b$ is contained in the smooth locus
of $\WW_b$ and since $\WW \setminus \QQ$
is also smooth, $\WW$ is a smooth fourfold.

By construction $\pi$ contracts a divisor
onto a relative curve $\CC \subset \QQ$.
Thus $\pi: \YY \to \WW$
is a birational
projective morphism of smooth
varieties, with at most one-dimensional
fibers, hence by 
Danilov's decomposition theorem
\cite{Danilov},
$\pi$ is a composition of blow ups
with smooth centers.
Since all fibers of $\pi$ are irreducible,
in fact $\pi$ is a blow up of $\CC \subset \WW$ and $\CC$ is smooth.

It remains to check the type
of the curve $\CC_b \subset \QQ_b$.
Since $\CC_b$ is a flat family,
it suffices to consider $b \ne 0$.
    The normal bundle of $\QQ_b$ in $\WW_b$ is
    \begin{itemize}
        \item $d = 4$: $\NN = \OO(2,2)$
        \item $d = 5$: $\NN = \OO(1,1)$
        \item $d = 6$: $\NN = \OO(0,1)$.
    \end{itemize}
    Computing the normal bundle
    of $\EE_b$ in $\YY_b$ in 
    two ways using the blow ups $\sigma$ and
    $\pi$, we obtain $\NN(-\CC_b) \simeq \OO(0,-1)$.
    It follows that
    bidegree and the type of $\CC_b$ must be as claimed.
    
    In the $d = 6$ case the varieties
    $V_6$ and $V_6'$ can be distinguished
    using their Picard rank, which then
    distinguishes the curve types:
    the $(0,2)$ curve consists
    of two connected components,
    hence the blow up increases 
    Picard rank by two, while
    the $(1,1)$ curve is connected,
    hence the blow up increases the Picard rank
    by one.
\end{proof}

Let us consider the restriction of the diagram \eqref{diag:bir-model-proj-line} to fibers over $b \in S$.
Let us write $V = \VV_b$, $Y = \YY_b$ and so on.
Considering the local charts 
of a blow up $\pi_b$,
we see that if the complete local 
equation for $C$ on a quadric
surface is $g(z,w) = 0$,
then the blow up $Y$ has
an equation $xy + g(z,w) = 0$,
and so $Y$, and hence $V$ 
will have Gorenstein
terminal (in fact, $cA_n$ \eqref{eq:cDV}) singularities if
and only if $C$ is reduced.
Furthermore, we see that $V$ is at most 
nodal
if and only if both $W$ and
$C$ are at most nodal,
in 
which case
\begin{equation}\label{eq:nodes}
|\Sing(V)| = |\Sing(Y)| = 
|\Sing(W)| + |\Sing(C)|.
\end{equation}
We call $C$ an \emph{associated curve}
to $V$ (this curve
depends on $V$ and a chosen standard line $L \subset V$).

\begin{corollary}\label{cor:singularities}
Gorenstein terminal del Pezzo threefolds
of degrees listed below 
can only 
have the following types of singularities:
\begin{itemize}
    \item $d = 4$: only $cA_n$ singularities;
    at most six nodes
    \item $d = 5$: only nodal; at most three nodes
    \item $d = 6$: only nodal; at most one node for $V_6$ and $V_6'$ is smooth
\end{itemize}
\end{corollary}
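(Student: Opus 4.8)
The plan is to reduce the statement, via projection from a line, to an elementary analysis of reduced curves of small bidegree on a smooth quadric surface. Given a del Pezzo threefold $V$ of degree $4 \le d \le 6$ with Gorenstein terminal singularities, the first step is to apply Lemma \ref{lem:standardline} to realize $V = \VV_0$ as the special fibre of a degeneration $\VV \to B$ of del Pezzo threefolds carrying a standard family of lines, and then to invoke Theorem \ref{thm:degeneration}: restricting the resulting birational model to the fibre over $0$ yields the diagram \eqref{diag:bir-model-proj-line}, in which $\sigma \colon Y \to V$ is the blow up of a standard line $L$ lying in the smooth locus of $V$, and $\pi \colon Y \to W$ is the blow up of the associated curve $C \subset \QQ_0 \subset W$, with $\QQ_0 \cong \P^1 \times \P^1$ a smooth quadric surface. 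By Theorem \ref{thm:degeneration}, $W = \WW_0$ is $\P^3$ when $d = 4$, a smooth or nodal quadric threefold in $\P^4$ when $d = 5$, and $\P^1 \times \P^2$ when $d = 6$; and $C$ is a reduced divisor on $\QQ_0$ (reduced because $V$ is Gorenstein terminal, by the discussion following \eqref{diag:bir-model-proj-line}) of bidegree $(2,3)$, $(1,2)$, and $(1,1)$ or $(0,2)$, respectively.

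The second step is the local analysis following \eqref{diag:bir-model-proj-line}. Since $\sigma$ is the blow up of a smooth curve in the smooth locus, $V$ and $Y$ have the same singularities; and over a point of $C$ the threefold $Y$ has local equation $xy + g(z,w) = 0$, where $g(z,w) = 0$ is the local equation of the planar reduced curve $C$ on the smooth surface $\QQ_0$. Such singularities are of type $cA_n$ as in \eqref{eq:cDV}, and they are nodes exactly over the nodes of $C$. The only other singularities of $V$ are those inherited from $W$, and \eqref{eq:nodes} expresses the number of nodes of $V$ as the number of nodes of $W$ plus the number of nodes of $C$. It then remains to bound the singularities of $C$ and $W$ in each degree.

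The last step is the following case analysis. For $d = 4$, $W = \P^3$ is smooth, so every singularity of $V$ is $cA_n$ and the number of nodes of $V$ equals that of $C$; since $C$ is a reduced divisor of bidegree $(2,3)$, hence of arithmetic genus $(2-1)(3-1) = 2$, and has at most five irreducible components, its $\delta$-invariant satisfies
\[
\delta(C) = \chi(\OO_{\widetilde C}) - \chi(\OO_C) \le c + 1 \le 6,
\]
where $\widetilde C = \bigsqcup_{i=1}^{c} \widetilde C_i \to C$ is the normalization, and the number of nodes of $C$ is at most $\delta(C)$. For $d = 5$, $W$ is a smooth or nodal quadric threefold, hence has at most one node, and $C$ has bidegree $(1,2)$, so the same computation (a divisor of bidegree $(1,2)$ has at most three irreducible components) gives $\delta(C) \le 2$; moreover a reduced curve of bidegree $(1,2)$ is at worst a union of lines and a smooth conic meeting transversally with no triple point, hence has only nodes, and so $V$ is nodal with at most $1 + 2 = 3$ nodes. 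For $d = 6$, $W = \P^1 \times \P^2$ is smooth, a reduced curve of bidegree $(1,1)$ is a smooth conic or a pair of lines meeting in one node, and a reduced curve of bidegree $(0,2)$ is a disjoint union of two lines; so $V$ is nodal with at most one node, and smooth in the $(0,2)$ case. Finally, by Theorem \ref{thm:degeneration} (or directly by Theorem \ref{thm:delPezzo}) the $(1,1)$ case is $V_6$ and the $(0,2)$ case is $V_6' = (\P^1)^3$, which yields the last two assertions.

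The step requiring the most care is the reduction through \eqref{diag:bir-model-proj-line}: one has to use that $C$ is reduced and embedded in the \emph{smooth} surface $\QQ_0$, so that all of its singularities are planar and contribute only $cA_n$ singularities to $V$, together with the elementary but slightly case-heavy enumeration of reduced curves of bidegrees $(2,3)$, $(1,2)$, $(1,1)$ and $(0,2)$ on $\P^1 \times \P^1$ and their possible multiple points. Although not needed for the statement as phrased, one can verify that all of the bounds are sharp by running the construction of Theorem \ref{thm:degeneration} in reverse, starting from the relevant nodal curve on a quadric surface — for $d = 4$, the $2 \times 3$ grid of lines on $\P^1 \times \P^1$, which has six nodes.
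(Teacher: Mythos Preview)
Your proof is correct and follows exactly the approach of the paper: reduce via Lemma~\ref{lem:standardline} and Theorem~\ref{thm:degeneration} to the diagram~\eqref{diag:bir-model-proj-line}, use the local model $xy + g(z,w) = 0$ and~\eqref{eq:nodes} to transfer the question to the associated curve $C$ (and $W$ for $d=5$), and then enumerate reduced curves of the prescribed bidegree on $\P^1\times\P^1$. The paper's own proof is a two-line appeal to this same setup, so you have simply made explicit the case analysis and the $\delta$-invariant bound that the paper leaves to the reader.
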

\begin{proof}
Indeed, only
these types of singularities
can occur when blowing up the curve $C$
of the given type. The number of nodes
can be computed 
using \eqref{eq:nodes}.
\end{proof}

From diagram \eqref{diag:bir-model-proj-line}
we obtain the relations between
divisor classes in $\Pic(\YY)/\Pic(S)$, which we will use later:
\begin{itemize}
\item $d = 4$:
\begin{equation}\label{eq:relations4}
h:= H - \EE, \quad E = 2h - \DD , \quad H  = 3h - \DD \end{equation}

\item $d = 5$:
\begin{equation}\label{eq:relations5}
h:= H - \EE, \quad \EE = h - \DD , \quad H  = 2h - \DD \end{equation}
\end{itemize}

\section{Derived categories of  del Pezzo fibrations}

\subsection{Relative tilting semiorthogonal decompositions}

Let $\TT$ be a $k$-linear
triangulated category.

\begin{definition}[\cite{BO-preprint}, \cite{bondal-kapranov}]
A collection $\AA_1,\ldots ,\AA_m$ of full triangulated subcategories of $\TT$ is called a
semiorthogonal 
decomposition of $\TT$,
if 
\begin{itemize}
    \item for all $1\leq i < j\leq m$,
    $\Hom(\AA_j, \AA_i) = 0$,
\item the smallest triangulated subcategory of $\TT$ containing $\AA_1 ,\ldots ,\AA_m$ coincides with $\TT$.
\end{itemize}
\end{definition}

If each $\AA_i$ is admissible in $\TT$ that is,  the inclusion functor $\AA_i \subset \TT$
has both adjoint functors,
then we say that the semiorthogonal decomposition is admissible.

We write $\TT=\langle\AA_1 , \ldots , \AA_m\rangle$
for a semiorthogonal decomposition; all decompositions
we consider will be admissible, and in many cases admissibility is automatic.
The following result is a typical
starting
point for constructing
admissible semiorthogonal decompositions.

\begin{theorem}[Orlov \cite{orlov-monoidal}]\label{thm:blowup}
Let $Z \subset X$ be smooth subvariety
of pure codimension $c$.
Let $\pi\colon\wt X\to X$ be the blow up
of $X$ with center $Z$,
and let $i: E \to \wt{X}$ be the exceptional
divisor, with projective
bundle structure $p: E \to Z$.
Then there is an admissible semiorthogonal
decomposition 
\[
\Db(\wt{X}) = \langle 
\Db(Z)_{-(c-1)}, \dots, \Db(Z)_{-1}, 
\pi^*(\Db(X)) \rangle ,
\]
where $\Db(Z)_{-j}$
is the image of the fully faithful
functor $(i_* p^*(-)) \otimes \OO(jE): \Db(Z) \to \Db(\wt{X}).$
\end{theorem}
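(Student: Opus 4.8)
This is Orlov's blow-up formula \cite{orlov-monoidal}, and the plan is to observe that the argument in the smooth case uses only inputs that survive when $X$ is merely Gorenstein and $Z$ is lci. Since $Z\hookrightarrow X$ is a regular embedding of pure codimension $c$, the Rees algebra of $I_Z$ is the symmetric algebra of the conormal bundle, so $E$ carries a $\P^{c-1}$-bundle structure $p\colon E\to Z$ (the projectivization of the conormal bundle), with $\OO_{\wt{X}}(-E)|_E=\OO_E(1)$ and $\OO_{\wt{X}}(E)|_E=\OO_E(-1)$; the standard local computation on a regular sequence gives $\pi_*\OO_{\wt{X}}=\OO_X$; and the canonical bundle formula for blow ups of regular embeddings gives $\omega_{\wt{X}}\cong\pi^*\omega_X\otimes\OO_{\wt{X}}((c-1)E)$. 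As $\omega_X$ is a line bundle, so is $\omega_{\wt{X}}$, which proves that $\wt{X}$ is Gorenstein. Since $X$, $\wt{X}$, $Z$, $E$ are all Gorenstein, the Grothendieck--Serre duality functors are anti-autoequivalences of their bounded derived categories; conjugating the right adjoints $\pi_*$, $p_*$ of $\pi^*$, $p^*$ by these duality functors produces left adjoints as well, and together with the tautological adjoints $i^*\dashv i_*\dashv i^!$ this yields the admissibility of the subcategories below. (For $c=1$ the blow up is an isomorphism and the statement is vacuous, so assume $c\ge 2$.)

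Full faithfulness of $\pi^*$ follows from $\pi_*\pi^*F\cong F\otimes\pi_*\OO_{\wt{X}}\cong F$ and adjunction. For $\Phi_j=(i_*p^*(-))\otimes\OO_{\wt{X}}(jE)$ it is enough, twisting being an autoequivalence, to treat $\Phi_0=i_*p^*$: here $\Hom_{\wt{X}}(i_*p^*A,i_*p^*B)=\Hom_Z(A,p_*i^!i_*p^*B)$, and using $i^!(-)=i^*(-)\otimes\OO_E(-1)[-1]$ together with the standard triangle $p^*B\to i^*i_*p^*B\to p^*B\otimes\OO_E(1)[1]\to$ one obtains $p_*i^!i_*p^*B\cong B$, because $p_*\OO_E(-1)=0$ on the $\P^{c-1}$-bundle.

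Semiorthogonality reduces to $\Hom_{\wt{X}}(\pi^*F,\Phi_jA)=0$ for $1\le j\le c-1$ and $\Hom_{\wt{X}}(\Phi_aB,\Phi_bA)=0$ for $1\le a<b\le c-1$, the pieces being arranged so that the largest twist $\Phi_{c-1}$ stands furthest to the left and $\pi^*\Db(X)$ furthest to the right. Via the same adjunctions both vanishings become computations of $p_*\OO_E(m)$ with $m$ in the range $1-c\le m\le -1$ --- namely $m=-j$ in the first case, and $m\in\{a-b,a-b-1\}$ in the second --- and these are zero on the $\P^{c-1}$-bundle. The choice of twists $\OO_{\wt{X}}(jE)$ and of the position of $\pi^*\Db(X)$ is dictated precisely so that every relevant Euler characteristic lands in this vanishing range; putting $\Db(Z)_{-j}:=\Phi_j(\Db(Z))$, this is exactly the semiorthogonal collection in the statement.

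The remaining, and main, point is generation. A standard localization argument --- the complement $\wt{X}\setminus E$ is identified with $X\setminus Z$, and the kernel of restriction to $\wt{X}\setminus E$ is generated by $i_*\Db(E)$, the relevant triangles coming from $0\to\OO_{\wt{X}}(-E)\to\OO_{\wt{X}}\to i_*\OO_E\to 0$ and its twists --- shows that $\Db(\wt{X})$ is generated by $\pi^*\Db(X)$ together with $i_*\Db(E)$. Feeding in the projective bundle semiorthogonal decomposition of $\Db(E)$ and the identity $i_*\big(p^*A\otimes\OO_E(-l)\big)=\Phi_l(A)$, one concludes that $\Db(\wt{X})$ is generated by $\pi^*\Db(X)$ and $\Phi_0\Db(Z),\dots,\Phi_{c-1}\Db(Z)$. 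It then suffices to show that the copy $\Phi_0\Db(Z)$ is redundant, that is, lies in the triangulated category generated by $\pi^*\Db(X)$ and $\Phi_1\Db(Z),\dots,\Phi_{c-1}\Db(Z)$; the transparent way to see this is a Koszul-type computation of the derived pullback $\pi^*\iota_{Z*}A$ along the blow-up square (here $\iota_Z\colon Z\hookrightarrow X$), which has $\mathcal{H}^0=\Phi_0A$ and whose other cohomology sheaves are $i_*$ of sheaves on $E$ twisted by $\OO_E(-l)$ with $l\ge 1$, hence lie in $\langle\Phi_1\Db(Z),\dots,\Phi_{c-1}\Db(Z)\rangle$ after unwinding the relative Euler sequences on $E\to Z$. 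Since $\pi^*\iota_{Z*}A\in\pi^*\Db(X)$, this exhibits $\Phi_0A$ inside $\langle\pi^*\Db(X),\Phi_1\Db(Z),\dots,\Phi_{c-1}\Db(Z)\rangle$; once $\Phi_0\Db(Z)$ has been absorbed, the surviving, semiorthogonal collection generates $\Db(\wt{X})$ and is therefore the asserted semiorthogonal decomposition. I expect this generation-and-absorption bookkeeping, rather than the formal full-faithfulness and semiorthogonality checks, to be the step requiring genuine care.
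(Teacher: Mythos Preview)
The paper does not give its own proof of this statement: Theorem~\ref{thm:blowup} is stated with attribution to Thomason and Orlov, with a pointer to \cite[Theorem 5.1]{KPS} for the precise version used, and is then applied without further argument. So there is nothing in the paper to compare your sketch against; your write-up is essentially a reconstruction of the standard Orlov argument, carried out with the observation that the inputs (regular embedding, projective-bundle structure on $E$, $\pi_*\OO_{\wt X}=\OO_X$, canonical bundle formula, Grothendieck duality) all survive in the Gorenstein/lci setting.

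One small point worth tightening in your sketch: concluding that $\wt X$ is Gorenstein from invertibility of $\omega_{\wt X}$ tacitly uses that $\wt X$ is Cohen--Macaulay, which you should state (it follows since $X$ is CM and blowing up an lci center preserves this, e.g.\ by a local computation on the Rees algebra or because $\wt X\to X$ is an lci morphism). Otherwise your outline matches the argument in the cited references.
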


We introduce the type of
semiorthogonal
decompositions of singular varieties that 
we are interested in.
\begin{definition}\cite{Kalck-Pavic-Shinder}\label{def:Kawamata}
Let $X$ be a Gorenstein projective variety.
An admissible semiorthogonal decomposition
\[
\Db(X) = \langle \BB ,  \AA_1, \dots, \AA_r 
\rangle
\]
is called a {\it Kawamata decomposition}
if $\BB \subset \Dperf(X)$ and each $\AA_i$ 
is equivalent
to $\Db(R_i)$, with $R_i$ a finite-dimensional $k$-algebra.
\end{definition}

For example, a smooth projective
variety with a full exceptional
collection admits a Kawamata decomposition
with each $\AA_i \simeq \Db(k)$ (and $\BB = 0$),
and Kawamata decompositions can be considered
as a way of generalizing exceptional
collections to singular varieties.
For examples of curves admitting 
Kawamata semiorthogonal decompositions see Remark \ref{rem:burban-algebras} below.
For other examples see \cite{KKS}, \cite{Kalck-Pavic-Shinder}, \cite{Kawamata-CY}, \cite{Kawamata-V6}, \cite{Kawamata-P1113}, \cite{kawamata-smoothing}.

We will now introduce a relative version of a Kawamata semiorthogonal decomposition.
For that we use semiorthogonal decompositions over a smooth base $S$ defined by Kuznetsov \cite{Kuznetsov-base} and 
reformulated by Perry in a more abstract setting of stable $\infty$-categories 
\cite{Perry-relative}.

An $S$-linear category $\TT$ is an appropriately enhanced $k$-linear triangulated category endowed with an action
\[
\otimes\colon \Db(S) \times \TT \to \TT
\]
and it automatically admits relative Hom-objects \cite[2.3.1]{Perry-relative}
\[
\uRHom_{\TT/S}(-, -) \colon \TT^\mathrm{op} \times \TT \to \Db(S)
\]
with a functorial isomorphism for all 
$a \in \Db(S)$, $t, u \in \TT$
\[
\RHom_S(a, \uRHom_{\TT/S}(t, u)) = \RHom_\TT(a \otimes t, u).
\]

For example, if $\TT = \Db(\XX)$ for a scheme $f\colon \XX \to S$, $\TT$ is $S$-linear with $\otimes$ defined by $f^*(-) \otimes -$ and with
$
\uRHom_{\TT/S}(-,-) = f_*\uRHom_\XX(-,-).
$

More generally, if $\RR$ is a locally free sheaf of $\OO_S$-algebras, we consider
the derived category of coherent sheaves of right $\RR$-modules
$\Db(S, \RR)$, see \cite[2.1]{Kuznetsov-quadric-fibration}.
The category
$\TT = \Db(S, \RR)$ is $S$-linear
via the natural tensor product functor
\[
\otimes: \Db(S) \times \Db(S, \RR) \to \Db(S, \RR)
\]
and $\uRHom_{\TT/S}(-,-) = f_*\uRHom_{\RR}(-, -)$.
Note that $\Db(S) = \Db(S, \OO)$.

\medskip

For the rest of this subsection we assume that
$f\colon \XX \to S$ is a flat projective 
morphism between smooth varieties.

\begin{definition}
By a relative tilting semiorthogonal decomposition we mean an $S$-linear 
semiorthogonal decomposition
\[
\Db(\XX) = \langle \AA_1, \ldots, \AA_r \rangle
\]
such that we have $S$-linear equivalences
$\AA_i \simeq \Db(S, \RR_i)$ 
for some locally free sheaves of algebras $\RR_i$ on $S$.
\end{definition}

The name relative tilting decomposition 
is due to the fact that the image $T_i \in \Db(\XX)$
of each $\RR_i$
satisfies $f_*{\uRHom}(T_i, T_i) = \RR_i[0]$
so when $S = \Spec(k)$ we get a so-called
pretilting object \cite{kawamata-smoothing} which is tilting (i.e. generating) if $r = 1$.

By the standard properties of base change we obtain
for every $b \in S$ an induced Kawamata semiorthogonal decomposition
\[
\Db(\XX_b) \simeq \langle \Db(\RR_1 \otimes k(b)), \ldots , \Db(\RR_r \otimes k(b)) \rangle.
\]

Before we give examples of relative tilting decompositions we state a simple admissibility property.
For an $S$-linear category $\TT$ a relative Serre functor 
\cite[4.6]{Perry-relative}
is an $S$-linear equivalence
$\SS_{\TT/S}: \TT \to \TT$ 
with a functorial isomorphism for $t, u \in \TT$
\[
\uRHom_{\TT/S}(t, \SS_{\TT/S}(u)) \simeq
\uRHom_{\TT/S}(u, t)^\vee.
\]
For example for a morphism $\XX \to S$, the $S$-linear category
$\TT = \Db(\XX)$
 admits a relative Serre functor given by
    $- \otimes \omega_{\XX/S}[\dim(X)-\dim(S)]$.
    The following result is well-known in the absolute case.

\begin{lemma}\label{lem:admissibility}
Given an $S$-linear semiorthogonal decomposition
$\TT = \langle \AA_1, \ldots \AA_m
\rangle$ 
if both $\TT$ and all the components $\AA_i$
have a relative Serre functor, then 
the decomposition 
is admissible.
\end{lemma}

\begin{proof}
It is clear that $\AA_1$ is left admissible.
Using the standard argument involving the
Serre functor for $\TT$ and $\AA_i$
we can write
\[
\TT = \langle \AA_2, \ldots, \AA_m, \SS^{-1}_{\TT/S}\AA_1 \rangle
\]
which implies that $\AA_2$ is left admissible.
By induction we obtain that all $\AA_i$ are left admissible.
A similar argument shows right admissibility.
\end{proof}

Our first example of a relative
tilting decomposition comes from quadric fibrations
\cite{Kuznetsov-quadric-fibration}.

\begin{theorem}[{\cite[Theorem 4.2]{Kuznetsov-quadric-fibration}}]\label{thm:quadratic-fibrations}
Let $f\colon \XX \to S$ be a flat
quadric fibration with smooth $\XX$ and $S$.
Then there is a relative
tilting semiorthogonal
decomposition
\[
\Db(\XX) \simeq \langle \Db ( S , \BB_0 ) , f^\ast\Db(S) \otimes \OO_X (1) , \ldots , f^\ast\Db(S) \otimes \OO_X(n-2) \rangle ,
\]
where 
$\BB_0$ is the sheaf of even parts of Clifford algebras on $S$.
\end{theorem}

Another example of tilting decomposition is given by
degenerations of rational curves.

\begin{proposition}\label{prop:nodal-curves-degen}
Let $f: \XX \to S$ be a flat projective morphism with general members isomorphic a smooth
rational curve, and singular fibers having nodal singularities. 
Then, after passing to an open covering of $S$, we have an $S$-linear semiorthogonal decomposition
\begin{equation}
\label{eq:Db-curves-rel}
\Db(\XX) = \langle \Db(S, \RR), f^*(\Db(S)) \rangle.
\end{equation}
\end{proposition}

Recall that if $X$ is a projective
nodal curve of arithmetic genus zero
and components $X_1, \ldots, X_m$ we have
$\Pic(X) = \Z^m$ generated by isomorphism
classes of line bundles $L_i$ such that
\begin{equation}\label{eq:def-Li}
L_i|_{X_j} \simeq \OO_{X_j}(-\delta_{ij}).
\end{equation}
Consider a line bundle on $X$ given by
\begin{equation}\label{eq:def-L}
L = \bigoplus_{i=1}^m L_i^{\oplus r_i}
\end{equation}
with all $r_i > 0$.
We will call any such line bundle $L$ a \emph{minimally negative bundle}.
For example if $X = \P^1$, then minimally negative bundles have the form $\OO(-1)^{\oplus r}$, $r > 0$.

\begin{lemma}\label{lem:T-curves}
Assume that $f\colon \XX \to S$ is as in Proposition \ref{prop:nodal-curves-degen}.
Assume that $T$ is a locally free sheaf on $\XX$ such that for every $b \in B$
the restriction $T|_{\XX_b}$ 
is a minimally negative line bundle. Then $\RR := f_*{\uRHom(T, T)}$ is a locally free sheaf of algebras on $S$.
Define a functor
\begin{equation}
\label{eq:PhiT}
\Phi_T: \Db(S, \RR) \to \Db(\XX), \quad
\FF \mapsto f^*(\FF) \otimes_{f^*(\RR)} T.
\end{equation}
Then we have an $S$-linear semiorthogonal decomposition
\begin{equation}
\label{eq:Db-curves-rel-S}
\Db(\XX) = \langle \Phi_T (\Db(S, \RR)), f^*(\Db(S)) \rangle.
\end{equation}
\end{lemma}
\begin{proof}
This is a standard argument in the spirit of \cite{Samokhin}.
The fact that $\RR$ is a locally sheaf boils down to a computation on each fiber which uses \eqref{eq:def-Li}.
It is then a standard computation that $\Phi_T$
is fully faithful, its image is semiorthogonal to $f^*(\Db(S))$ and that
these two subcategories form an $S$-linear semiorthogonal decomposition of $\Db(\XX)$.
\end{proof}

\begin{proof}[Proof of Proposition \ref{prop:nodal-curves-degen}]
For every point $b \in S$
there is an open neighborhood $U \subset S$ of $b$
and a relative hyperplane section $D \subset \XX_U$,
which is finite 
over $U$, and intersects every 
irreducible
component
of every fiber.
Furthermore we can assume that these intersections are transverse.
To simplify the notation let us assume that $S = U$.
Let $d$ be the degree of $D$ over $S$.

Then $\XX' = \XX \times_S D$ is a 
degree $d$ cover $p\colon \XX' \to \XX$
which
admits 
a section $i: D \to \XX'$.
Let $T := \pi_*(\OO(-D))$. 
By construction it restricts to a minimally negative vector bundle on the fibers of $f$.
We apply Lemma \ref{lem:T-curves} to $T$ to get the result.
\end{proof}

\begin{remark}\label{rem:burban-algebras}
Restricting the tilting decomposition \eqref{eq:Db-curves-rel}
to each genus zero curve $\XX_b$, 
for $b \in S$ 
we obtain a semiorthogonal decomposition
\[
\Db(\XX_b) = \langle \Db(R), \OO \rangle 
\]
with a finite-dimensional algebra $R$.
This algebra is 
isomorphic to a matrix algebra
when $\XX_b \simeq \P^1$.
On the other hand, if $\XX_b$
is a nodal curve, then
$R$ is the algebra defined by Burban \cite{burban}.
It can be described explicitly as path algebra of a quiver, see \cite[Theorem 2.1]{burban}
or \cite[Remark 4.14]{Kalck-Pavic-Shinder} for uniform
formulations.
For example, if $\XX_b$ is a chain of two smooth projective
lines, that is a nodal conic, 
then $R$ is the path algebra
of the quiver
\begin{equation}\label{eq:Burban1}
\bal
\xymatrix{
1 \ar@/^/[r]^a & \ar@/^/[l]^{a^*} 2}&, \;\;\;\; 
 a a^* = a^* a= 0.
\eal
\end{equation}
This algebra is isomorphic to the Clifford algebra of a nodal quadric of odd dimension (cf \cite[Example 5.6]{Kawamata-CY}).

Similarly, for
a nodal 
chain of three smooth
projective lines,
$R$ is the path algebra
of the quiver
\begin{equation}\label{eq:Burban2}
\bal
\xymatrix{
1 \ar@/^/[r]^a & \ar@/^/[r]^{b}\ar@/^/[l]^{a^*} 2
& \ar@/^/[l]^{b^*} 3}&, \;\;\;\; 
 a a^* = a^* a= b b^* = b^*b = 0 .
\eal
\end{equation}

We call the path
algebra for
\eqref{eq:Burban1}
the 
\emph{single Burban algebra}
and
the path algebra
for \eqref{eq:Burban2}
the \emph{double Burban} algebra.
Both of these algebras will
appear in the semiorthogonal
decompositions of nodal del Pezzo
threefolds.
\end{remark}

\subsection{Main results}
The following theorem 
generalizes corresponding
results in the smooth case
to degenerations:
semiorthogonal decomposition
for smooth $V_4$
constructed by Bondal and Orlov \cite{BO-ci}
and an exceptional collection 
for smooth for $V_5$ constructed
by Orlov \cite{orlov-V5};
see \cite[2.4]{Kuznetsov-rationality}
for a uniform treatment of smooth Fano threefolds.

\begin{theorem}\label{thm:decomp}
Let $\VV \to S$ be a nodal
del Pezzo fibration 
of degree $d \in \{4, 5\}$
with a standard family of lines $\LL \subset \VV$.
Let $\CC \to S$ be the associated family of 
curves 
as in Theorem \ref{thm:degeneration}, corresponding
to $\LL$.

\begin{itemize}
    \item[(i)] If $d = 4$, then $\CC\to S$ 
    is a family of at most nodal
    curves of arithmetic genus $2$ and
    we have an admissible $S$-linear semiorthogonal decomposition
    \[
    \Db(\VV) = \langle \Db(\CC), \Db(S) , \Db(S)(H) \rangle.
    \]

\item[(ii)] If $d = 5$, 
then $\CC\to S$ is a family of generalized twisted cubics
and, after passing to an open cover of $S$, 
there is an admissible $S$-tilting semiorthogonal decomposition
\[
\Db(\VV) = \langle \Db(S, \RR_1), \Db(S, \RR_2), \Db(S) , \Db(S)(H) \rangle 
\]
where $\RR_1$ is the sheaf of Burban algebras
corresponding to $\CC \to S$ 
and $\RR_2$ is the sheaf of Clifford algebras
corresponding 
to the quadric threefold fibration $\WW \to S$.
\end{itemize}
\end{theorem}

Here the embedding of $\Db(S)$ and $\Db(S)(H)$
is obtained by the pullback with respect to $f\colon \VV \to S$
 and twisting by $\OO(H)$ in the latter case
and the embedding of other categories are specified in the proof of the theorem.
As a consequence, we obtain:

\begin{corollary}\label{cor:decomp}
For a 
nodal del Pezzo threefold 
$V$ of degree 
$d \in \{4, 5\}$
we have the following admissible
semiorthogonal decompositions.
\begin{itemize}
\item[(i)] If $d = 4$, then 
\[
\Db(V) = \langle \Db(C), \OO , \OO(H) \rangle
\]
where $C$ is the associated nodal curve of arithmetic genus two.

\item[(ii)] If $d = 5$, there is a Kawamata type semiorthogonal decompositions
\[
\Db(V) = \langle \Db(R_1), \Db(R_2), \OO, \OO(H) \rangle
\]
where $R_1$ and $R_2$ are Burban algebras.
\end{itemize}

\end{corollary}

\begin{proof}[Proof of Corollary \ref{cor:decomp}]
By Lemma \ref{lem:standardline1} (ii) there is a nodal 
del Pezzo fibration  $f\colon \VV \to S$ with a standard family of lines $\LL\subset \VV$, where $S$ 
is a smooth affine curve, $0 \in S$ a point and $\VV_0 = V$. 
By base change of the decomposition
constructed in Theorem \ref{thm:decomp} 
to $0 \in S$, we get that 
\[
\Db( V ) = \langle \AA_V, \OO , \OO(H) \rangle ,
\]
where $\AA_V\simeq \Db(C)$ and $\AA_V \simeq \langle \AA_C , \AA_Q \rangle $ for $d = 4$ and $d = 5$ respectively.
\end{proof}

In both cases $d = 4$ and $d = 5$
there is an induced semiorthogonal decomposition
of $\Dperf(V)$, see \cite[Proposition 1.10 and 1.11]{Orlov-LG}
and \cite[Theorem 4.4]{Kalck-Pavic-Shinder}

Before we prove Theorem \ref{thm:decomp}
we use it to deduce
a complete 
structural result about
derived categories of nodal del Pezzo threefolds,
which implies Theorem 1.1 in the 
Introduction.

\begin{corollary}\label{cor:main}
Let $V$ be a nodal (non smooth)
del Pezzo threefold
of arbitrary degree $d \ge 1$
and $\AA_V = \langle \OO, \OO(H)\rangle^\perp$ be the main component of the derived category $\Db(V)$.
The following conditions are equivalent:
\begin{enumerate}
    \item $\Db(V)$ admits a Kawamata decomposition
    \item $\Db(V)$ has an admissible decomposition
    with all components equivalent to derived
    categories of finite-dimensional algebras
    \item $\AA_V$ has an admissible decomposition
    with all components equivalent to derived
    categories of finite-dimensional algebras
    \item $V$ is maximally nonfactorial
    \item $V$ has maximal defect
    \item $d \in \{5,6\}$
\end{enumerate}
\end{corollary}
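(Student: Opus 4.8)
The plan is to prove the cycle of implications
$$(1)\Rightarrow(4)\Rightarrow(5)\Rightarrow(6)\Rightarrow(3)\Rightarrow(2)\Rightarrow(1),$$
three steps of which are essentially formal. The implication $(4)\Rightarrow(5)$ is the observation following \eqref{eq:defect-seq} that maximal nonfactoriality implies maximal defect. For $(2)\Rightarrow(1)$ I would note that a decomposition as in $(2)$ is already a Kawamata decomposition, with $\AA=0$ (the zero subcategory lies in $\Dperf(X)$ and is admissible). For $(3)\Rightarrow(2)$ I would use that every del Pezzo threefold carries the admissible decomposition $\Db(X)=\langle\AA_X,\OO,\OO(H)\rangle$, with $\AA_X$ admissible by \cite[Lemma 2.15]{KPS}: appending to a decomposition of $\AA_X$ as in $(3)$ the two exceptional objects $\OO$ and $\OO(H)$, each generating a copy of $\Db(k)$, produces a decomposition of $\Db(X)$ as in $(2)$. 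Finally, for $(1)\Rightarrow(4)$ I would invoke the negative $\Kt$-theory obstruction of \cite{KPS}: a Kawamata decomposition of $\Db(X)$ forces $\Kt_{-1}(X)=0$, and for a nodal threefold $\Kt_{-1}(X)$ is --- by the extension of \eqref{eq:defect-seq} by a surjection onto it --- the cokernel of $\Cl(X)\to\Z^{\Sing(X)}$, so its vanishing is exactly maximal nonfactoriality.

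For $(6)\Rightarrow(3)$ I would treat the two degrees separately. For $d=5$, Theorem \ref{thm:decomp}(ii) gives $\AA_{V_5}\simeq\langle\AA_C,\AA_Q\rangle$ with $C$ a chain of smooth rational curves and $Q$ a smooth or nodal quadric threefold, and both $\AA_C$ and $\AA_Q$ are equivalent to derived categories of finite-dimensional algebras by Lemma \ref{lem:algebras}; this is a decomposition of $\AA_{V_5}$ as required in $(3)$. For $d=6$, Corollary \ref{cor:singularities} shows $X$ is the one-nodal linear section of $\P^2\times\P^2$, and the decomposition of $\Db(X)$ of Kawamata \cite{Kawamata-V6} --- which by Remark \ref{rem:V6} is also obtained by the blow up / blow down method of Theorems \ref{thm:degeneration} and \ref{thm:decomp}, with associated curve a nodal conic on $\P^1\times\P^2$ --- decomposes $\AA_{V_6}$ into exceptional objects and a copy of $\Db(R)$ for $R$ the single Burban algebra; again this realizes $(3)$.

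The real content is $(5)\Rightarrow(6)$. A nodal non-smooth del Pezzo threefold has $1\le d\le 6$ (those of degree $\ge 7$ being smooth), so it suffices to rule out $d\in\{1,2,3,4\}$ under the assumption of maximal defect. For $d\le 3$ this is Corollary \ref{cor:CynkRams}. For $d=4$ I would pass to the associated curve: choosing a standard line $L\subset V_4$ (Lemma \ref{lem:standardline}), the blow up $Y=\Bl_L(V_4)$ is, by diagram \eqref{diag:bir-model-proj-line} and Theorem \ref{thm:degeneration}, also the blow up $\Bl_C(\P^3)$ along the associated curve $C$, a connected nodal curve of arithmetic genus two, with $\Sing(Y)=\Sing(V_4)$ and with the number of nodes of $C$ equal to $|\Sing(V_4)|\ge 1$ by \eqref{eq:nodes}. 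Since $L\subset V_4$ and $C\subset\P^3$ are both lci of codimension two, the blow up formula in negative $\Kt$-theory, together with $\Kt_{-1}(\P^1)=\Kt_{-1}(\P^3)=0$, gives $\Kt_{-1}(V_4)\cong\Kt_{-1}(Y)\cong\Kt_{-1}(C)$. For a nodal curve $\Kt_{-1}(C)$ is free abelian of rank $b_1(\Gamma_C)$, the first Betti number of the dual graph of $C$, and $p_a(C)=\sum_i g_i+b_1(\Gamma_C)=2$, where $g_i$ is the geometric genus of the $i$-th irreducible component. I would then check that $\sum_i g_i\le 1$: an irreducible bidegree $(2,3)$ curve with $\nu\ge 1$ nodes has geometric genus $2-\nu\le 1$, while a reducible bidegree $(2,3)$ curve on a quadric surface has at most one component of positive geometric genus, which if present is a $(2,2)$ curve whose complement is a single line. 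Hence $b_1(\Gamma_C)\ge 1$, so $\Kt_{-1}(V_4)\cong\Z^{b_1(\Gamma_C)}$ has positive rank; identifying $\Kt_{-1}(V_4)$ with $\Coker(\Cl(V_4)\to\Z^{\Sing(V_4)})$ (again \cite{KPS}), the defect of $V_4$ equals $|\Sing(V_4)|-b_1(\Gamma_C)<|\Sing(V_4)|$, so $V_4$ does not have maximal defect.

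I expect the $d=4$ case of $(5)\Rightarrow(6)$ to be the main obstacle: the hypersurface defect computation of Proposition \ref{prop:cynk} is unavailable for intersections of two singular quadrics, so one is forced to route through the associated genus-two curve and negative $\Kt$-theory, and then to verify the elementary but case-by-case fact that the dual graph of that curve always carries a loop. Everything else is either formal or a direct appeal to Theorem \ref{thm:decomp}, Lemma \ref{lem:algebras}, and \cite{KPS} and \cite{Kawamata-V6}.
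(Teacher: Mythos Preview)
Your proof is correct and follows essentially the same cycle of implications as the paper. The only substantive difference is in the $d=4$ step of $(5)\Rightarrow(6)$: the paper obtains $\Kt_{-1}(V_4)=\Kt_{-1}(C)$ from Theorem~\ref{thm:decomp} (equivalently, as it notes, from diagram~\eqref{diag:bir-model-proj-line}) and then simply cites \cite[Corollary~3.3]{KPS} for $\Kt_{-1}(C)\ne 0$, whereas you unpack this by computing $b_1(\Gamma_C)\ge 1$ via $p_a(C)=\sum g_i+b_1(\Gamma_C)$ together with the case check that a nodal $(2,3)$ curve on a smooth quadric has $\sum g_i\le 1$; this makes explicit what the paper leaves to the citation.
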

\begin{proof}
We prove the following chain of implications 
\[
(3) \implies 
(2) \implies (1) \implies (4) \implies (5)
\implies (6)
\implies (3). 
\]
The first two 
implications are trivial.

(1) $\implies$ (4) is \cite[Theorem 1.1]{Kalck-Pavic-Shinder}.

(4) $\implies$ (5) is trivial.

(5) $\implies$ (6)
We first note that by Corollary \ref{cor:CynkRams}
we have $d \ge 4$. 
In the $d = 4$ 
case
we can relate the defect to the negative K-group $\Kt_{-1}(V)$
\cite{Kalck-Pavic-Shinder} as follows.
By Theorem \ref{thm:decomp}
(or using
the diagram \eqref{diag:bir-model-proj-line} directly)
we see that $\Kt_{-1}(V) = \Kt_{-1}(C) \ne 0$
(see \cite[Corollary 3.3]{Kalck-Pavic-Shinder} for $\Kt_{-1}$ of a nodal curve),
hence $V$ does not have
maximal defect by \cite[Proposition 3.5]{Kalck-Pavic-Shinder}.
Finally if $d \ge 7$, then $V$ can not be singular
by Theorem \ref{thm:delPezzo}.

(6) $\implies$ (3) follows from Corollary \ref{cor:decomp} for the $d = 5$
case.
For the $d = 6$ case
the result is 
\cite[7.2]{Kawamata-V6} (the variety $X$
in \cite[7.2]{Kawamata-V6} is the unique
nodal $V_6$ by \cite[Theorem 7.1]{Prokhorov-G-Fano});
see also Remark \ref{rem:V6}.
\end{proof}

\subsection{Proof of Theorem \ref{thm:decomp}}
The proof relies on projection from a standard {family of lines}
from Theorem \ref{thm:degeneration}.
More concretely, we describe {$\Db(\YY)$} in terms of the blow up {$\sigma\colon \YY\to \VV$} in Proposition \ref{prop:Y-to-V_d} and in Proposition \ref{prop:Y-to-W_d} we describe {$\Db(\YY)$} in terms
of {$\pi\colon \YY\to \WW$} for cases $d= 4,5$ separately. 
The proof of Theorem \ref{thm:decomp} follows then readily by comparing these two descriptions of {$\Db(\YY)$}.

Let {$f_{\EE}:\EE \to S$} be the exceptional divisor of {$\sigma\colon \YY\to \VV$, where $i\colon \EE \hookrightarrow \YY$} denotes the corresponding embedding and let {$f_{\DD}\colon \DD\to S$} be the exceptional divisor of {$\pi\colon \YY\to \WW$ with inclusion morphism $j\colon \DD \hookrightarrow \WW$} as in {Theorem \ref{thm:degeneration}}.

\begin{lemma}\label{lem:blowup-mutation}

\noindent{(i)} We have {the following} equality of subcategories
in {$\Db(\YY)$}:
\begin{equation}\label{eq:blowup-mutation-E-H}
 { \langle f_{\YY}^\ast\Db(S) (- \EE ) , f_{\YY}^\ast\Db(S)  \rangle = \langle f_{\YY}^\ast\Db(S) , i_\ast f_{\EE}^\ast\Db(S) \rangle
  = \langle i_\ast f_{\EE}^\ast\Db(S)  ,f_{\YY}^\ast\Db(S)(- \EE ) \rangle  }
\end{equation}
and
\begin{equation}\label{eq:blowup-mutation-D-h}
 {   \langle f^\ast_{\YY}\Db(S) (-h) ,f^\ast_{\YY}\Db(S) (\DD-h)\rangle  = \langle j_\ast f^\ast_{\DD}\Db(S) (\DD-h) , f^\ast_{\YY}\Db(S)(-h) \rangle . }
\end{equation}
(ii) We have {$\Hom ( f^\ast_{\YY}\Db(S) (H- \EE ), i_\ast f^\ast_{\EE}\Db(S)(2H) [k] )=0$} for all $k$.
 \end{lemma}

\begin{proof}
(i) The pair {$i_\ast f^\ast_{\EE}\Db(S) (\EE ), f_{\YY}^\ast\Db( S)$} is {semiorthogonal} (this is part of
Theorem \ref{thm:blowup} with $j = 1$)
and \eqref{eq:blowup-mutation-E-H} is obtained
by standard mutations using the distinguished triangle
\[
{\OO_{\YY}(- \EE ) \to \OO_{\YY} \to \OO_{\EE}.}
\]
The same argument proves \eqref{eq:blowup-mutation-D-h}.

(ii) By adjunction it suffices to prove vanishing of
{$\Hom(\Db(S) , \Db(S) \otimes {f_{\EE}}_\ast \OO_{\EE}(H+ \EE ) [k] )$}
and this follows from
\[
{{f_{\EE}}_\ast  (\OO_{\EE}(H+ \EE )) = {f_{\LL}}_\ast(p_*(\OO_\EE(\EE))(H)) = 0.}
\]
\end{proof}
In what follows we work with the main
components 
of derived categories,
in the following cases 
\begin{itemize}
    \item $\CC$ a family of
    of rational curves: $\AA_\CC = \langle \OO \rangle^\perp$,
    \item $\QQ$ a quadric fibration of relative dimension three: $\AA_{\QQ} = \langle
\OO(-1) , \OO, \OO(1)
\rangle^\perp$,
    \item $\VV$ a del Pezzo fibration:
    $\AA_\VV = \langle \OO, \OO(H) \rangle^\perp$.
\end{itemize}

When mutating semiorthogonal
decompositions in the proofs below,
we use
a slight abuse of notation
by not specifying the embedding
functor of $\AA_{{\CC}}$, $\AA_{{\QQ}}$ and $\AA_{{\VV}}$
in the derived category of $\Db({\YY})$
when this embedding is clear from context
or not important for us.

\begin{proposition}\label{prop:Y-to-V_d}
Let $d\in \{ 4, 5, 6\}$ and let {$f_{\VV}\colon \VV \to S$ be a degeneration of del Pezzo threefolds containing a standard family of lines $\LL \subset \VV$}. Let {$f_{\YY}\colon \YY \to S$} be the blow up of {$\VV$} along {$\LL$}. There is an {$S$-linear}
admissible semiorthogonal decomposition
{\[
\Db(\YY) = \langle \AA_{\VV} , f^\ast_{\YY}\Db(S) ( \EE -H) , f^\ast_{\YY}\Db(S) (- \EE ) , f^\ast_{\YY}\Db(S) , f^\ast_{\YY}\Db(S) (H- \EE ) \rangle . 
\]
}
\end{proposition}

\begin{proof}
We will use
the blow up formula (Theorem \ref{thm:blowup}) together with mutations from Lemma \ref{lem:blowup-mutation}.

Theorem \ref{thm:blowup} applied to the blow up {$\sigma\colon\YY\to \VV$ along $\LL$} gives a semiorthogonal decomposition
\begin{align}\label{eq:first-step}
    \Db(\YY) &=\langle \Db(\LL )_{-1}, \sigma^*\Db(\VV )\rangle \nonumber \\
    &=\langle ( f_{\LL}^\ast\Db(S) \otimes \OO_{\LL}(-2))_{-1} , ( f^\ast_{\LL}\Db(S ) \otimes \OO_{\LL}(-1) )_{-1}, \sigma^*\Db(\VV )\rangle \nonumber \\
    &=\langle i_\ast f_{\EE}^\ast \Db(S)(\EE-2H), i_\ast f^\ast_{\EE}\Db(S)(\EE-H), \sigma^*\Db(\VV)\rangle \nonumber \\
    &=\langle \sigma^*\Db(\VV), i_\ast f_{\EE}^\ast \Db(S)(\EE-2H-K_{\YY}), i_\ast f^\ast_{\EE}\Db(S)(\EE -H- K_{\YY} ) \rangle. 
\end{align} 

Here we used {Orlov's projective bundle formula \cite[Theorem 2.6]{orlov-monoidal} in the second equation,} that {$p^\ast ( \OO_{\LL}(-1) )\simeq \OO_{\EE} (-H)$ (as $H\cdot \LL_b =1$ on each fiber of $\VV_b$, $b \in S$)} in the {third} equality and Serre duality in the {fourth} equality.

We plug in 
{$\Db(\VV)=\langle f_{\VV}^\ast\Db(S)(-H) , \AA_{\VV} , f_{\VV}^\ast\Db(S) \rangle $} and the canonical class{
\begin{equation}\label{eq:canon-div-blow-up}
K_{\YY} \equiv -2H + \EE \pmod{\Pic(S)}
\end{equation} 
}
in \eqref{eq:first-step} to obtain {
\begin{equation}
\Db(\YY)=\langle f_{\YY}^\ast\Db(S)(-H) , \sigma^\ast \AA_{\VV} , f_{\YY}^\ast\Db(S) , i_\ast f_{\EE}^\ast \Db(S), i_\ast f^\ast_{\EE}\Db(S)(H)  \rangle .
\end{equation}
}
Using Serre duality together with \eqref{eq:blowup-mutation-E-H} of Lemma \ref{lem:blowup-mutation}, we perform
the following sequence of mutations: {
\begin{align}\label{eq:second-step}
\Db(\YY) &= \langle f_{\YY}^\ast\Db(S)(-H) , \sigma^\ast \AA_{\VV} , f_{\YY}^\ast\Db(S) , i_\ast f_{\EE}^\ast \Db(S), i_\ast f^\ast_{\EE}\Db(S)(H)  \rangle \nonumber \\
 &=\langle  \sigma^\ast \AA_{\VV} , f_{\YY}^\ast\Db(S) , i_\ast f_{\EE}^\ast \Db(S), i_\ast f^\ast_{\EE}\Db(S)(H) ,  f_{\YY}^\ast\Db(S)(H - \EE ) \rangle  \nonumber \\
    &= \langle  \sigma^\ast \AA_{\VV} , f_{\YY}^\ast\Db(S)(-\EE) , f_{\YY}^\ast\Db(S) ,  f_{\YY}^\ast\Db(S)(H - \EE ) , f^\ast_{\YY}\Db(S)(H) \rangle \nonumber \\
    &= \langle f^\ast_{\YY}\Db(S)(\EE - H) ,  \sigma^\ast \AA_{\VV} , f_{\YY}^\ast\Db(S)(-\EE) ,   f_{\YY}^\ast\Db(S) ,  f_{\YY}^\ast\Db(S)(H - \EE )   \rangle \nonumber \\
    &= \langle  \AA_{\VV} , f^\ast_{\YY}\Db(S)(\EE - H) ,   f_{\YY}^\ast\Db(S)(-\EE) ,   f_{\YY}^\ast\Db(S) ,  f_{\YY}^\ast\Db(S)(H - \EE )   \rangle
     .
\end{align}
}

 {In the fourth equality we used Serre duality again 
 and we performed a left} mutation of {$\sigma^\ast \AA_{\VV}$} through {$f^\ast_{\YY}\Db(S)( \EE - H )$ in the fifth equality.} 
 This concludes the proof of the proposition.
\end{proof}
On the other hand, we can also decompose {$\Db(\YY )$} semiorthogonally with respect to the blow up {$\pi\colon\YY \to\WW$}.

\begin{proposition}\label{prop:Y-to-W_d}
Let $d\in \{ 4, 5 \}$ 
and consider diagram \eqref{diag:bir-model-proj-line}.
\begin{enumerate}[(i)]
\item \label{item:Y-to-W_d-i} Let $d=4$. There is an {$S$-linear}
admissible semiorthogonal decomposition  
\[
\Db( \YY ) = \langle \Db ( \CC ) , f^\ast_{\YY} \Db(S) (-h) , f^\ast_{\YY} \Db(S) ( \DD - 2h) , f^\ast_{\YY} \Db(S) , f^\ast_{\YY} \Db(S) (h)\rangle .
\]

\item \label{item:Y-to-W_d-ii} Let $d=5$ so 
that {$\WW = \QQ^3$ is a quadric threefold fibration with at most nodal quadrics as fibers}.
There is an {$S$-linear}
admissible semiorthogonal decomposition
{
\[
\Db(\YY ) = \langle \AA_{\CC} , \AA_{\QQ^3} , f^\ast_{\YY} \Db(S) (-h) , f^\ast_{\YY} \Db(S) ( \DD - h) , f^\ast_{\YY} \Db(S) , f^\ast_{\YY} \Db(S)(h) \rangle .  
\] 
}
\end{enumerate}
\end{proposition}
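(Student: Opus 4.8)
The plan is to apply the blow-up formula (Theorem \ref{thm:blowup}) to $\pi : Y \to W$ and then mutate the resulting semiorthogonal decomposition into the stated form, using the exceptional collections on $W$ (which is $\P^3$ when $d = 4$ and a smooth or nodal quadric $Q$ when $d = 5$) together with the divisor relations \eqref{eq:relations4} and \eqref{eq:relations5}. First, I would record that $\pi$ is the blow up of the smooth fourfold $W$ along the associated curve $C \subset Q \subset W$, which is lci of codimension $3$, so Theorem \ref{thm:blowup} gives
\[
\Db(Y) = \langle \Db(C)_{-3}, \Db(C)_{-2}, \Db(C)_{-1}, \pi^*\Db(W) \rangle,
\]
where $\Db(C)_{-j}$ is the image of $(i_{D,*} p_C^*(-)) \otimes \OO_Y(jD)$. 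In the $d = 4$ case I would plug in Beilinson's exceptional collection $\Db(\P^3) = \langle \OO, \OO(h), \OO(2h), \OO(3h) \rangle$, and in the $d = 5$ case the collection $\Db(Q) = \langle \AA_Q, \OO, \OO(h), \OO(2h) \rangle$ (with $W = Q$, using $\AA_Q = \langle \OO(-1), \OO, \OO(1)\rangle^\perp$ after an appropriate twist, or equivalently its admissible complement), pulled back to $Y$. The output is a long semiorthogonal decomposition of $\Db(Y)$ involving $\Db(C)$ (resp.\ $\AA_C$, since $C$ is a chain of rational curves for $d = 5$), and line bundles on $Y$ of the form $\OO_Y(a h + b D)$.

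Next I would perform the mutations needed to bring this into the claimed shape. The key algebraic input is the analog of Lemma \ref{lem:blowup-mutation}: the mutation identity \eqref{eq:blowup-mutation-D-h}, namely $\langle \OO_Y(-h), \OO_Y(D - h) \rangle = \langle \OO_D(D - h), \OO_Y(-h) \rangle$, together with its twists by powers of $h$, which follow from the distinguished triangle $\OO_Y(-D) \to \OO_Y \to \OO_D$ tensored by $\OO_Y(D - jh)$, plus the projective bundle structure $p_C : D \to C$ and the fact that $D_b \simeq \P^2$-bundle (for $d = 4$) or $\P^1$-bundle (for $d = 5$) over $C$. As in the proof of Proposition \ref{prop:Y-to-V_d}, one recognizes consecutive blocks $\langle \OO_D(a h) , \OO_D((a+1)h), \dots \rangle$ (for varying $a$) as images of different exceptional collections on the fibers of $p_C$ under $i_{D,*} p_C^*$, allowing one to slide the twist and free up line bundles on $Y$. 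A bookkeeping of which $\OO_Y(ah + bD)$ survive, combined with applications of \eqref{eq:Serre-mutation} using $K_Y$ (computable from $K_W$ and the codimension-$3$ blow up, and re-expressible via \eqref{eq:relations4}, \eqref{eq:relations5}), should collapse the collection to exactly $\{\OO_Y(-h), \OO_Y(D - 2h), \OO_Y, \OO_Y(h)\}$ in the $d = 4$ case and $\{\OO_Y(-h), \OO_Y(D - h), \OO_Y, \OO_Y(h)\}$ in the $d = 5$ case, with $\Db(C)$ (resp.\ $\langle \AA_C, \AA_Q \rangle$) pushed to the far left.

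I expect the main obstacle to be the mutation bookkeeping: the blow up along a curve in a fourfold produces three copies of $\Db(C)$ and four (resp.\ four, after accounting for $\AA_Q$) line bundles, so the decomposition of $\Db(Y)$ one starts with is considerably longer than the target, and one must carefully track how the twists $\OO_Y(jD)$ interact with the $h$-twists under repeated mutation — in particular verifying the requisite $\Ext$-vanishings (the analog of Lemma \ref{lem:blowup-mutation}(ii)) that make each intermediate step a genuine mutation of an exceptional pair rather than a more complicated operation. The $d = 5$ case carries the extra subtlety that $\AA_Q$ is not generated by an exceptional object when $Q$ is nodal, so one must invoke \eqref{eq:Serre-mutation} (which only needs one side to be perfect, or a Gorenstein algebra as in Lemma \ref{lem:admissible}) rather than the elementary mutation of line bundles to move $\AA_Q$ around; admissibility of every intermediate subcategory then follows, as in Proposition \ref{prop:Y-to-V_d}, from Lemma \ref{lem:admissible} for the perfect line bundles and from \eqref{eq:Serre-mutation} for $\Db(C)$, $\AA_C$ and $\AA_Q$. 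Once the line-bundle block matches the target, the final decomposition is read off directly.
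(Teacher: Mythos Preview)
There is a genuine error in your setup: you treat $W$ as a fourfold and $C\subset W$ as having codimension~$3$, but in diagram~\eqref{diag:bir-model-proj-line} the varieties $V$, $Y$, $W$ are the \emph{fibers} of the degeneration, hence threefolds. Concretely $W=\P^3$ for $d=4$ and $W=Q$ is a quadric threefold for $d=5$, so the associated curve $C$ has codimension~$2$ in $W$, and the exceptional divisor $D$ is a $\P^1$-bundle over $C$ in both cases. Consequently the blow-up formula (Theorem~\ref{thm:blowup}) yields only a single copy of $\Db(C)$:
\[
\Db(Y)=\langle \Db(C)_{-1},\ \pi^*\Db(W)\rangle,
\]
not three. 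Your anticipated ``long'' decomposition with three copies of $\Db(C)$ never arises, and the elaborate mutation bookkeeping you describe is both unnecessary and based on a wrong starting point. (Your own text is internally inconsistent here: you call $W$ a fourfold and write ``codimension~$3$'', yet use the four-term Beilinson collection for $\P^3$ and say $D$ is a $\P^1$-bundle when $d=5$.)

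Once the codimension is corrected, the actual argument is short. For $d=4$ one plugs in $\Db(\P^3)=\langle\OO,\OO(h),\OO(2h),\OO(3h)\rangle$ (shifted to start at $\OO(-h)$), applies \eqref{eq:Serre-mutation} with $K_Y=-4h+D$ to move $\OO_Y(2h)$ to the left as $\OO_Y(D-2h)$, mutates $\Db(C)$ past it, and finally swaps $\OO_Y(D-2h)$ and $\OO_Y(-h)$, which are completely orthogonal (this orthogonality is read off from Proposition~\ref{prop:Y-to-V_d} via the relations~\eqref{eq:relations4}). For $d=5$ one inserts $\Db(C)=\langle\AA_C,\OO_C\rangle$ and $\Db(Q)=\langle\AA_Q,\OO(-h),\OO,\OO(h)\rangle$, mutates $\AA_Q$ past the single object $\OO_D(D-h)$, and then uses \eqref{eq:blowup-mutation-D-h} once. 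No repeated $\Ext$-vanishing checks beyond Lemma~\ref{lem:blowup-mutation} are needed.
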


\begin{proof}
Theorem \ref{thm:blowup} applied to the blow up {$\pi\colon\YY \to \WW$} gives a semiorthogonal decomposition {
\begin{equation}\label{eqn:decomp-Y-to-W_d}
\Db(\YY ) = \langle \Db(\CC )_{-1} , \Db(\WW ) \rangle .
\end{equation}
}
We consider the cases $d = 4, 5$ separately.

Case $d=4$: Here, {$\WW \to S$} is a Zariski locally trivial $\P^3$-bundle with a relative hyperplane class $h$.
We can write out \eqref{eqn:decomp-Y-to-W_d} as {
\begin{align}\label{eq:decomp_d_4}
\Db(\YY) & = \langle \Db(\CC)_{-1} , f^\ast_{\YY} \Db(S) (-h) , f^\ast_{\YY} \Db(S) , f^\ast_{\YY} \Db(S)(h) , f^\ast_{\YY} \Db(S)(2h) \rangle \nonumber \\
& = \langle f^\ast_{\YY} \Db(S)( \DD - 2h) , \Db(\CC)_{-1} , f^\ast_{\YY} \Db(S) (-h) , f^\ast_{\YY} \Db(S) , f^\ast_{\YY} \Db(S)(h)  \rangle \nonumber \\
& = \langle \Db(\CC) , f^\ast_{\YY} \Db(S)( \DD - 2h) , f^\ast_{\YY} \Db(S) (-h) , f^\ast_{\YY} \Db(S) , f^\ast_{\YY} \Db(S)(h)  \rangle , 
\end{align}
}
where we used {the projective bundle formula \cite[Theorem 2.6]{orlov-monoidal} in the first equation}, Serre duality 
with {$K_{\YY} = -4h + \DD + f_{\YY}^\ast K_S$ in the second equality}
and left mutation of $\Db(\CC)$ through {$f^\ast_{\YY} \Db(S) (\DD -2h)$} in the {third} equality.

Furthermore, using the relations \eqref{eq:relations4} we can rewrite the pair {$f^\ast_{\YY} \Db(S) ( \DD -2h) , f^\ast_{\YY} \Db(S) (-h)$ as $f^\ast_{\YY} \Db(S) (- \EE ) ,f^\ast_{\YY} \Db(S) ( \EE -H)$}.
But from the decomposition of Proposition \ref{prop:Y-to-V_d}, we see that this pair is completely orthogonal.
Thus we can swap {$f^\ast_{\YY} \Db(S) ( \DD -2h)$ and $f^\ast_{\YY} \Db(S) (-h)$} in \eqref{eq:decomp_d_4} and we obtain item \eqref{item:Y-to-W_d-i}.
Admissibility {and $S$-linearity} of the constructed
semiorthogonal
decomposition follows 
as in the proof of Proposition \ref{prop:Y-to-V_d}.

Case $d=5$: We have {$\WW = \QQ^3$}, where {$\QQ^3$} is a quadric {fibration} in {$\P_S^4$} and the associated curve {$\CC \to S$ is a family of curves of arithmetic genus zero}. 
We rewrite \eqref{eqn:decomp-Y-to-W_d} as
\begin{align*}
\Db( \YY ) & = \langle \AA_{\CC} , i_* f^\ast_{\DD} \Db(S)(-h + \DD ), \AA_{\QQ^3}, f^\ast_{\YY} \Db(S)(-h) , f^\ast_{\YY} \Db(S) , f^\ast_{\YY} \Db(S)(h) \rangle \\
& = \langle \AA_{\CC} , \AA_{\QQ^3} ,  i_\ast f^\ast_{\DD} \Db(S)(-h + \DD ), f^\ast_{\YY} \Db(S)(-h) , f^\ast_{\YY} \Db(S) , f^\ast_{\YY} \Db(S)(h) \rangle \\
& = \langle \AA_{\CC} , \AA_{\QQ^3} , f^\ast_{\YY} \Db(S)(-h) , f^\ast_{\YY} \Db(S)(-h + \DD ) , f^\ast_{\YY} \Db(S) , f^\ast_{\YY} \Db(S)(h) \rangle.
\end{align*}
Here we used {\cite[Theorem 4.2]{Kuznetsov-quadric-fibration} in the first equation,} left mutation of {$\AA_{\QQ^3}$} through {$ i_\ast f^\ast_{\DD} \Db(S)(-h + \DD )$} in the second equation and \eqref{eq:blowup-mutation-D-h} in the third equation. 
Admissibility of the components
is automatic by Lemma \ref{lem:admissibility}.
\end{proof}

\begin{proof}[Proof of Theorem \ref{thm:decomp}] 
By Proposition \ref{prop:Y-to-V_d} we obtain
\[
\Db(\YY) =  \left\{
\begin{array}{rc}
     \langle \AA_{\VV}, f^\ast_{\YY} \Db(S)(-h) ,f^\ast_{\YY} \Db(S)( \DD -2h) , f^\ast_{\YY} \Db(S) , f^\ast_{\YY} \Db(S)(h)\rangle , & \text{if $d = 4$} \\
      \langle \AA_{\VV}, f^\ast_{\YY} \Db(S) (-h) , f^\ast_{\YY} \Db(S)( \DD -h) , f^\ast_{\YY} \Db(S) , f^\ast_{\YY} \Db(S)(h)\rangle , &  \text{if $d = 5$}\\
\end{array}
\right.
\]
where we used \eqref{eq:relations4}, \eqref{eq:relations5},
to rewrite {$\EE$} and $H$
in terms of divisors pulled back from {$\WW$}.
Comparing these expressions with Proposition \ref{prop:Y-to-W_d}, we obtain that {
\[
\AA_{\VV} \simeq  \left\{
\begin{array}{rc}
     \Db( \CC ) , & \text{if $d = 4$} \\
     \langle \AA_{\CC} , \AA_{\QQ^3} \rangle , &  \text{if $d = 5$}\\
\end{array}
\right.
\]
}
The obtained semiorthogonal decompositions are admissible by Lemma \ref{lem:admissibility}
because all the categories involved have relative Serre functors.
\end{proof}

\begin{remark}\label{rem:comparison}
Both cases $d = 4, 5$ of {Corollary \ref{cor:decomp}}
fit into Kuznetsov's framework of homological projective
duality, however his interpretation of the
category $\AA_{V}$ is different.
Specifically, in \cite[6.5]{Kuznetsov-HPD1},
for $d = 4$,
$\AA_{V}$ is shown to be equivalent to a derived category
of modules over an algebra over a different nodal curve,
while in our approach no sheaves of algebras is needed.
Using homological projective
duality 
one can decompose 
$\AA_{V}$ for $d = 5$
into derived categories of finite dimensional
dg-algebras (see
\cite[6.1]{Kuznetsov-HPD1} for the smooth case), 
which is less restrictive than the Kawamata
decomposition we construct for $d = 5$.
\end{remark}

\begin{remark}\label{rem:limit}
In the $d = 5$ case the two exceptional
objects generating $\AA_{V}$ 
in the smooth case degenerate to derived categories
of Burban's algebras \eqref{eq:Burban1},
\eqref{eq:Burban2}. Let us explain this in detail.
For a $1$-nodal
degeneration one exceptional object
degenerates to a single Burban algebra.
For a $2$-nodal degeneration one exceptional
object degenerates to a double Burban algebra,
or both exceptional objects degenerate to
single Burban algebras.
Finally for a $3$-nodal degeneration
two exceptional objects degenerate
to single and double Burban algebras respectively.
\end{remark}

\begin{remark}\label{rem:V6}
The construction of a Kawamata
decomposition for the nodal $V_6$ 
\cite[7.2]{Kawamata-V6}
can also be done using the projection
from a line as for $d = 4, 5$ cases.
Indeed, for $d = 6$
we can apply the blow up formula (Theorem \ref{thm:blowup}) to the blow up $\pi\colon Y \to W$, where $W = \P^2\times \P^1$ as in \eqref{diag:bir-model-proj-line} and the
associated curve $C$
is of type $(1,1)$. 
After mutating some line bundles, we can bring $\Db(Y)$ into the same form as in Proposition \ref{prop:Y-to-V_d} and we can express $\AA_{V}$ as
\[
\AA_{V} \simeq \langle \mathcal{E}_1 , \mathcal{E}_2 , \mathcal{E}_3 , \AA_C \rangle ,
\]
where $\mathcal{E}_i$ are exceptional objects.
\end{remark}


\end{document}